\newtheorem{theorem}{Theorem}
\newtheorem{corollary}{Corollary}
\newtheorem{definition}{Definition}
\newtheorem{proposition}{Proposition}
\newtheorem{lemma}{Lemma}
\newtheorem{question}{Question}
\newtheorem{example}{Example}
\newcommand{\rinf}{\mathbb{R}^\infty}
\newcommand{\outball}{\overrightarrow{\mathcal{B}}}
\newcommand{\strongball}{\mathcal{B}}
\newcommand{\gr}{\mathcal{R}}
\newcommand{\gl}{\mathcal{L}}
\newcommand{\gh}{\mathcal{H}}
\begin{document}

\title[Groups acting on semimetric spaces]{Groups acting on semimetric
 spaces and quasi-isometries of monoids}

\keywords{monoid, group, finitely generated, action, semimetric space, quasimetric space}
\subjclass[2000]{20M05; 20M30, 05C20}
\maketitle

\begin{center}

    ROBERT GRAY

    \medskip

    School of Mathematics \& Statistics,\ University of St Andrews, \\
    St Andrews KY16 9SS, Scotland.

    \medskip

    \texttt{robertg@mcs.st-andrews.ac.uk}

    \bigskip

    \bigskip

    MARK KAMBITES

    \medskip

    School of Mathematics, \ University of Manchester, \\
    Manchester M13 9PL, \ England.

    \medskip

    \texttt{Mark.Kambites@manchester.ac.uk} \\
\end{center}

\begin{abstract}
We study groups acting by length-preserving transformations on spaces
equipped with asymmetric, partially-defined distance functions. We
introduce a natural notion of quasi-isometry for such spaces and
exhibit an extension of the \v{S}varc-Milnor Lemma to this setting. Among
the most natural examples of these spaces are finitely generated monoids
and semigroups
and their Cayley and Sch\"{u}tzenberger graphs; we apply our results to
show a number of important properties of monoids are quasi-isometry
invariants.
\end{abstract}

\section{Introduction}

One of the most exciting and influential developments
of 20th century mathematics was the advent of \textit{geometric
group theory}. A key concept in that subject is \textit{quasi-isometry}: a notion
of equivalence between metric spaces which captures formally the intuitive
idea of two spaces looking the same ``when viewed from far away''.
The \textit{\v{S}varc-Milnor Lemma} (which has been described as the
``fundamental observation of geometric group theory'' \cite{delaHarpe2000})
guarantees that a discrete group which acts in a suitably controlled
way upon a geodesic metric space is quasi-isometric to that space. This
fact establishes a deep connection between geometry and group theory,
which continues to inform our understanding of both subjects.

At the same time, another pervasive theme in modern mathematics has been the
discovery of applications for traditionally pure areas of mathematics;
this trend is typified by the emergence of such fields as algebraic automata
theory \cite{Arbib1968, EilenbergA,EilenbergB} and tropical geometry \cite{Richter-Gebert2005}. Real-world problems
often do not display the high degrees of symmetry and structure enjoyed by 
many of the objects of classical pure mathematics, and this has led pure
mathematicians increasingly to study fundamentally non-rigid, asymmetric
structures, such as semigroups (which arise in algebraic automata theory)
and semirings (which arise in tropical geometry). Another key example is
that of spaces with \textit{asymmetric distance functions}; these have
traditionally been viewed as the poor relations of their symmetric
counterparts, but it is readily apparent that they arise with great
frequency in nature, and hence also in applied mathematics, and they now
seem increasingly relevant also in pure mathematics.

The paper explores a connection between these two trends in mathematics, by
studying discrete groups acting by isometries on
\textit{semimetric spaces}, by which we mean spaces equipped
with asymmetric, partially defined distance functions.
It transpires that both the  notion of quasi-isometry and the
\v{S}varc-Milnor lemma itself admit natural and straightforward
extensions to the more general setting of groups acting on semimetric
spaces. One area of pure mathematics where asymmetric distance functions
naturally arise is the theory of finitely generated
semigroups and monoids; indeed, any finitely generated monoid or
semigroup is naturally endowed with the structure of semimetric space.
We show that a number of natural and widely studied properties of semigroups
and monoids are invariant under quasi-isometry. Some of these are obtained
as applications of our results concerning group actions, while others are
proved directly by geometric arguments.

In addition to this introduction, this article comprises eight sections.
In Section~\ref{sec_semimetric} we introduce quasi-isometries
between semimetric spaces, and some of their foundational properties.
 In Section~\ref{sec_groups} we consider groups acting by
isometries on semimetric spaces, establishing an analogue of the
\v{S}varc-Milnor lemma. Section~\ref{sec_graphs} studies
some important examples of semimetric spaces which arise from   
directed graphs, and in particular from finitely generated semigroups
and monoids.
Section~\ref{sec_apps} demonstrates how group
actions on semimetric spaces can be applied in semigroup theory, by
establishing that a number of natural properties of finitely generated
monoids are invariant under quasi-isometry. Finally, Sections~\ref{sec_growth}
and ~\ref{sec_ends} contain direct (without reliance on group actions)
geometric proofs that some further important properties of finitely generated
monoids (namely growth rate and number of ends) are also quasi-isometry
invariants. 

\section{Semimetric and Quasimetric Spaces}\label{sec_semimetric}

In this section we introduce the main objects of our study, namely spaces
equipped with an asymmetric distance function, and prove a number of
foundational results.

We begin by discussing some issues relating to terminology.
There are several natural ways to generalise the notion of a metric space
by weakening the axioms; these have arisen in numerous different contexts
and terminology for them is not standardised. Spaces with asymmetric
distance functions are perhaps most widely called ``quasi-metric'' spaces
(see for example \cite{Kelly1963, Wilson1931}). Unfortunately,
this terminology conflicts fundamentally with the standard language of
geometric group theory, where the prefix ``quasi'' is by convention used to
mean ``up to finite additive and multiplicative distortion''. To complicate
matters still further, we shall actually need to study spaces which are metric
up to finite distortion, that is, ``quasi-metric'' in the sense that group
theorists would expect. Since we place minimal reliance on the existing
theory, we have chosen to develop terminology consistent with that of geometric
group theory.

Throughout this paper we write $\rinf$ for the set $\mathbb{R}^{\geq 0} \cup \lbrace \infty \rbrace$ of
non-negative real numbers with $\infty$ adjoined. We extend the usual
ordering on $\mathbb{R}^{\geq 0}$ to $\rinf$ in the obvious
way, taking in particular $\infty$ to be the infimum value of
the empty set, and the supremum value of any subset of $\mathbb{R}^{\geq 0}$
which is not bounded above. We also extend addition of non-negative
reals to $\rinf$ and multiplication of positive reals to $\rinf \setminus \lbrace 0 \rbrace$
by defining
$$\infty + x = x + \infty = y \infty = \infty y = \infty$$
for all $x \in \rinf$ and $y \in \rinf \setminus \lbrace 0 \rbrace$.

\begin{definition}[Semimetric space] A \emph{semimetric space} is a pair $(X,d)$
where $X$ is a set, and $d : X \times X \to \rinf$  is a function satisfying:
\begin{itemize}
\item[(i)] $d(x,y)=0$ if and only if $x=y$; and
\item[(ii)] $d(x,z) \leq d(x,y) + d(y,z)$;
\end{itemize}
for all $x,y,z \in X$. If, in addition, $d$ satisfies $d(x,y) = d(y,x)$ then we say that $(X,d)$ is a \emph{metric space}.

A point $x_0 \in X$ is called a \textit{basepoint} for the space $X$ if
$d(x_0, y) \neq \infty$ for all $y \in X$. The space is called
\textit{strongly connected} if every point is a basepoint, that is, if
no two points are at distance $\infty$.
\end{definition}

\begin{definition}[Isometric embeddings]
A map $f : X \to X'$ between semimetric spaces is called
an \textit{isometric embedding} if
$$d(f(x),f(y)) = d(x,y) \ \mbox{for all} \  x, y \in X,$$
and is called an \textit{isometry} if it is a surjective isometric embedding.
\end{definition}

\begin{definition}[Paths and geodesics]
Let $X$ be a semimetric space, $x, y \in X$ and $n \in \mathbb{R}^{\geq 0}$. 
A \textit{path of length $n$ from $x$ to $y$} is a map $p : [0,n] \to X$
such that $p(0) = x$, $p(n) = y$ and $d(p(a), p(b)) \leq b-a$ for all $0 \leq a \leq b \leq n$.

If $d(x, y) \neq \infty$ then a \textit{geodesic} from $x$ to $y$ is a path
of length $d(x,y)$ from $x$ to $y$. The semimetric space $X$ is called
\emph{geodesic} if for all $x,y \in X$ with $d(x,y) \neq \infty$ there
exists at least one geodesic from $x$ to $y$.
\end{definition}
Notice that if $p : [0, d(x,y)] \to X$ is a geodesic from $x$ to $y$ then
for any $a \in [0, d(x,y)]$ we have $d(x, p(a)) \leq a$ and
$d(p(a), y) \leq d(x,y)-a$. But by the triangle inequality,
$d(x,y) \leq d(x,p(a)) + d(p(a),y)$ and distances are positive, so we
deduce that $d(x,p(a)) = a$ and $d(p(a),y) = d(x,y) - a$.

However, a geodesic \textbf{cannot} be defined as an isometric embedding
of a particular space, in the same way that a geodesic in a conventional
metric space is an isometric embedding of an interval on the real line.
The requirements for a function to be a geodesic places demands on 
distances in only one direction; distance in the other direction may
vary, so images of geodesics of the same length need not be isometric.

\begin{definition}[Distance between sets]
Let $X$ be a semimetric space and $A, B \subseteq X$. Then we define the
distance from $A$ to $B$ to be
$$d(A,B) \ = \ \inf_{a \in A, b \in B} d(a,b).$$
\end{definition}
Note in particular that two sets are at distance $\infty$ if and only if
all their members are at distance $\infty$.

\begin{definition}[Balls]
Let $x_0 \in X$ and let $r$ be a non-negative real number. The \emph{out-ball} of radius $r$ based at $x_0$ is
\[
\overrightarrow{\mathcal{B}}_r(x_0) = \{ y \in X: d(x_0,y) \leq r \}.
\]
Dually, the \emph{in-ball} of radius $r$ is defined by
\[
\overleftarrow{\mathcal{B}}_r(x_0) = \{ y \in X: d(y,x_0) \leq r \},
\]
and the \emph{strong ball} of radius $r$ based at $x_0$ is
\[
\mathcal{B}_r(x_0) = \overrightarrow{\mathcal{B}}_r(x_0) \cap
											\overleftarrow{\mathcal{B}}_r(x_0).
\]
\end{definition}

\begin{definition}[Quasi-dense]
Let $T$ be a subset of a semimetric space $X = (X,d)$, and $0 \leq \mu < \infty$.
We say that $T$ is \emph{$\mu$-quasi-dense} in $X$ if for all $x \in X$
there exists $y \in T$ such
that
\[
\max(d(y,x), d(x,y)) \leq \mu.
\]
That is, if $X$ is covered by strong balls of radius $\mu$ around points in $T$.  
A subset is called \textit{quasi-dense} if it is $\mu$-quasi-dense for
some $0 \leq \mu < \infty$.
\end{definition}

\begin{definition}[Quasi-isometric embedding and quasi-isometry]
Let $f:(X,d) \rightarrow (X',d')$ be a map between semimetric spaces, and
$1 \leq \lambda < \infty$ and $0 < \epsilon < \infty$ be constants. 
We say that $f$ is a \emph{$(\lambda,\epsilon)$-quasi-isometric embedding},
and $X$ \textit{embeds quasi-isometrically in $X'$}, if
\[
\frac{1}{\lambda}\;d(x,y) - \epsilon \leq
d'(f(x), f(y)) \leq \lambda d(x,y) + \epsilon
\]
for all $x,y \in X$.
If in addition the image $f(X)$ of $f$ is $\mu$-quasi-dense with
constant $\mu$ then $f$ is called \textit{$(\lambda,\epsilon,\mu)$-quasi-isometry}
and we say that $X$ and $Y$ are \textit{quasi-isometric}.
\end{definition}

The following straightforward proposition can be proved exactly as in
the case of quasi-isometries of metric spaces (see, for example, \cite[Exercise~10.6]{Ghys1991}).

\begin{proposition}\label{prop_qsiprops}
Quasi-isometric embedding is a reflexive and transitive relation on the
class of semimetric spaces. Quasi-isometry is an equivalence relation
on the class of semimetric spaces.
\end{proposition}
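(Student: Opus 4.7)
The plan is to verify three properties: reflexivity and transitivity for quasi-isometric embedding, plus symmetry to upgrade transitivity of quasi-isometry to an equivalence relation. Reflexivity is immediate: the identity map $\mathrm{id}_X$ on any semimetric space is a $(1,1)$-quasi-isometric embedding with $0$-quasi-dense image, hence a $(1,1,0)$-quasi-isometry.

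For transitivity, I would take a $(\lambda_1,\epsilon_1)$-quasi-isometric embedding $f : X \to Y$ and a $(\lambda_2,\epsilon_2)$-quasi-isometric embedding $g : Y \to Z$, and straightforwardly chain the two upper and two lower bounds through $f(x)$, obtaining that $g \circ f$ is a $(\lambda_1\lambda_2, C)$-quasi-isometric embedding where $C$ depends only on the four parameters. In the quasi-isometry case, if $f(X)$ is $\mu_1$-quasi-dense in $Y$ and $g(Y)$ is $\mu_2$-quasi-dense in $Z$, then given $z \in Z$ I would pick $y \in Y$ with $\max(d(y,z),d(z,y)) \le \mu_2$, then $x \in X$ with $\max(d(f(x),y),d(y,f(x))) \le \mu_1$; the upper bound of $g$ then converts the $\mu_1$-bounds into bounds between $g(f(x))$ and $g(y)$, which combine with the $\mu_2$-bounds via the triangle inequality to show $g(f(X))$ is quasi-dense in $Z$.

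Symmetry of quasi-isometry is where the real work lies. Given a $(\lambda,\epsilon,\mu)$-quasi-isometry $f : X \to Y$, I would use the axiom of choice to define a quasi-inverse $g : Y \to X$: for each $y \in Y$, select $g(y) \in X$ so that $f(g(y))$ lies in the strong ball $\mathcal{B}_\mu(y)$, which is non-empty by quasi-density. To bound $d(g(y_1), g(y_2))$ above and below in terms of $d(y_1, y_2)$, I would apply the quasi-isometric embedding inequalities to $d(f(g(y_1)), f(g(y_2)))$ and then use the triangle inequality, with the strong-ball property controlling $d(y_i, f(g(y_i)))$ and $d(f(g(y_i)), y_i)$ simultaneously by $\mu$. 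A parallel computation, applied to $g(f(x))$ for $x \in X$, shows that $g(Y)$ is quasi-dense in $X$.

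The main potential obstacle is precisely this symmetry argument, because in an asymmetric setting an ``approximate inverse'' construction needs control of distances in both directions to feed into the triangle inequality. The design choice to define quasi-density via strong balls rather than out-balls or in-balls is exactly what removes this obstacle: it guarantees the two-sided control needed so that the classical metric-space argument transfers essentially unchanged.
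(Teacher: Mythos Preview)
Your proposal is correct and matches the paper's approach: the paper does not give a detailed proof but simply asserts that the classical metric-space argument (citing \cite[Exercise~10.6]{Ghys1991}) carries over unchanged, which is precisely what you have sketched. Your closing observation---that defining quasi-density via \emph{strong} balls is exactly what makes the symmetry argument go through in the asymmetric setting---is the one point worth making explicit, and you have identified it correctly; note only the minor slip in your transitivity paragraph where ``$\max(d(y,z),d(z,y)) \le \mu_2$'' should read ``$\max(d(g(y),z),d(z,g(y))) \le \mu_2$'', since $y \in Y$ and $z \in Z$ live in different spaces.
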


Note that any isometric embedding is a quasi-isometric embedding;
in particular, the inclusion map from a subspace into its containing
space is always a quasi-isometric embedding, and is
a quasi-isometry exactly if the subspace is quasi-dense.

\begin{definition}[Quasi-metric space]\label{def_quasimetric}
Let $1 \leq \lambda < \infty$ and $0 \leq \epsilon < \infty$.
A semimetric space $X$ is called \emph{$(\lambda,\epsilon)$-quasi-metric}
if it is strongly connected and $d(y,x) \leq \lambda d(x,y) + \epsilon$
for all $x,y \in X$. A semimetric space is called \textit{quasi-metric}
if it is $(\lambda,\epsilon)$-quasi-metric for some $\lambda$ and $\epsilon$.
\end{definition}
Notice that if a non-empty semimetric space admits $\lambda$ and $\epsilon$
such that the inequalities in Definition~\ref{def_quasimetric} are satisfied,
then it is strongly connected (and hence quasi-metric) if and only if it has a
basepoint. Intuitively, a quasi-metric space is a semimetric space in which the
distances between vertices do not depend too dramatically on the direction
in which one travels between them. One might expect such a space to behave
rather like a metric space; in fact the following easy
proposition says that it must resemble some \textit{particular}
metric space.

\begin{proposition}\label{quasimetric}
A semimetric space $X$ is quasi-metric if and only if it is
quasi-isometric to a metric space.
\end{proposition}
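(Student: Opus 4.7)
The plan is to handle the two directions separately; both are short, and the symmetrization trick for the forward direction is the only real content.

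For the forward direction, assume $(X,d)$ is $(\lambda,\epsilon)$-quasi-metric. The natural candidate for a metric on the underlying set is the symmetrization
\[
d'(x,y) \ = \ \max\bigl(d(x,y),\, d(y,x)\bigr).
\]
Because $X$ is strongly connected, $d'$ takes values in $\mathbb{R}^{\geq 0}$; it is symmetric and separates points by construction, and the triangle inequality for $d'$ follows from the triangle inequality for $d$ applied in each direction and then taking the maximum. So $(X,d')$ is a metric space. The identity map $\mathrm{id}:(X,d)\to(X,d')$ clearly satisfies $d(x,y)\le d'(x,y)$, and by the quasi-metric assumption $d'(x,y)\le \lambda d(x,y)+\epsilon$, so $\mathrm{id}$ is a surjective $(\lambda,\epsilon)$-quasi-isometric embedding (with $\mu=0$), hence a quasi-isometry. (If necessary we replace $\epsilon$ by any strictly positive quantity bounding it, to fit the definition's strict positivity requirement.)

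For the reverse direction, let $f:(X,d)\to(Y,d_Y)$ be a $(\lambda,\epsilon)$-quasi-isometry with $(Y,d_Y)$ a metric space. From the defining inequality
\[
\tfrac{1}{\lambda}d(x,y)-\epsilon \ \le\ d_Y(f(x),f(y)) \ \le\ \lambda d(x,y)+\epsilon,
\]
the lower bound rearranges to $d(x,y)\le \lambda d_Y(f(x),f(y))+\lambda\epsilon$. Since $d_Y$ takes only finite values this already forces $d(x,y)<\infty$ for all $x,y$, so $X$ is strongly connected. Now use the symmetry of $d_Y$ and the upper bound applied to the pair $(y,x)$:
\[
d(x,y) \ \le\ \lambda d_Y(f(x),f(y))+\lambda\epsilon \ =\ \lambda d_Y(f(y),f(x))+\lambda\epsilon \ \le\ \lambda^2 d(y,x)+2\lambda\epsilon.
\]
Thus $X$ is $(\lambda^2, 2\lambda\epsilon)$-quasi-metric.

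There is no real obstacle: the only thing to get right is the choice of symmetrization (taking $\max$ rather than, say, the sum, which also works but gives slightly uglier constants), and the bookkeeping in the reverse direction must route symmetry in $Y$ through both the lower and upper quasi-isometry inequalities to feed one distance in $X$ back into the other.
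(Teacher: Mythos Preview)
Your proof is correct and follows essentially the same approach as the paper: symmetrize the semimetric and use the identity map for the forward direction, and chain the two quasi-isometry inequalities through the symmetry of $d_Y$ for the converse (the paper obtains the identical constants $(\lambda^2,2\lambda\epsilon)$ there). The only difference is cosmetic: the paper symmetrizes with the sum $d'(x,y)=d(x,y)+d(y,x)$ rather than your $\max$, which leads to the slightly bulkier constant $\lambda'=\lambda+1$ in place of your $\lambda$.
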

\begin{proof}
Suppose first that $(X,d)$ is a $(\lambda, \epsilon)$-quasi-metric space.
Define $d' : X \times X \to \rinf$ by
$d'(x,y) = d(x,y) + d(y,x)$.
It is well-known and easy to prove that $(X, d')$ is a metric space.
We claim that the identity map $f:X \rightarrow X$ is a
$(\lambda',\epsilon,0)$-quasi-isometry where
 $\lambda' = \max(\lambda + 1, \frac{\lambda}{\lambda + 1})$ (note that $\lambda' \geq 1$).

Since $f$ is surjective, its image is $0$-quasi-dense.
Now for $x,y \in X$ we have
\begin{eqnarray*}
d'(f(x),f(y)) 	& =    & d'(x,y) \\ & = & d(x,y) + d(y,x) \\
								& \leq & d(x,y) + (\lambda d(x,y) + \epsilon) \\
								& =    & \left( \lambda + 1 \right) d(x,y) + \epsilon \\
								& \leq & \lambda' d(x,y) + \epsilon.
\end{eqnarray*}
Also since $(X,d)$ is $(\lambda,\epsilon)$-quasi-metric we have
\begin{eqnarray*}
d'(f(x),f(y)) & =    & d(x,y) + d(y,x) \\
							& \geq & d(x,y) + (\frac{1}{\lambda}d(x,y) - \epsilon) \\
							& = & \left( \frac{\lambda + 1}{\lambda} \right) d(x,y) - \epsilon \\
							& \geq & \frac{1}{\lambda'} d(x,y) - \epsilon.
\end{eqnarray*}

For the converse, suppose that $f: X \rightarrow X'$ is a
$(\lambda, \epsilon, \mu)$-quasi-isometry where $(X',d')$ is a metric
space.
Let $x,y \in X$ be arbitrary. Then since $(X',d')$ is metric we have:
$$\frac{1}{\lambda} d(x,y) - \epsilon \leq d'(f(x),f(y)) = d'(f(y),f(x)) \leq \lambda d(y,x) + \epsilon$$
which implies
\[
d(x,y) \leq \lambda (\lambda d(y,x) + \epsilon + \epsilon) = \lambda^2 d(y,x) + 2 \lambda \epsilon.
\]
So $X$ is $(\lambda^2, 2 \lambda \epsilon)$-quasi-metric.
\end{proof}

\begin{corollary}\label{corol_quasi-metricity}
Quasi-metricity is a quasi-isometry invariant. 
\end{corollary}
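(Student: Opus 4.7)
The plan is to deduce this immediately from Proposition~\ref{quasimetric} together with the transitivity of quasi-isometry established in Proposition~\ref{prop_qsiprops}. Specifically, suppose $X$ is quasi-metric and $Y$ is quasi-isometric to $X$. By the forward direction of Proposition~\ref{quasimetric}, there exists a metric space $M$ that is quasi-isometric to $X$. Since quasi-isometry is an equivalence relation, $Y$ is quasi-isometric to $M$, and then the converse direction of Proposition~\ref{quasimetric} applied to $Y$ yields that $Y$ is quasi-metric.

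The only thing to check is that the hypotheses of Proposition~\ref{quasimetric} are genuinely symmetric in the two directions, that is, that the proposition is truly a characterisation rather than only an implication; inspecting the statement, this is clear. So there is no substantive obstacle here — the corollary is essentially a formal consequence of packaging quasi-metricity as ``quasi-isometric to some metric space'' and then invoking transitivity.
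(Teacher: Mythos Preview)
Your proof is correct and is exactly the argument the paper intends: the corollary is stated immediately after Proposition~\ref{quasimetric} with no separate proof, precisely because it follows formally from that characterisation together with the transitivity of quasi-isometry in Proposition~\ref{prop_qsiprops}.
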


An instructive example of a quasi-metric space which is not metric is a
finitely generated group, equipped with the word metric induced by a
(not necessarily symmetric) finite monoid generating set.
We shall see below (see Theorem~\ref{Svarc-Milnor}) that a
semimetric space which admits an suitably controlled action by a group
must be also quasi-metric.

\section{Groups Acting on Semimetric Spaces}\label{sec_groups}

In this section we consider groups acting by isometries on semimetric spaces and prove an analogue of the \v{S}varc--Milnor Lemma. As we shall see in Section~\ref{sec_apps}, such actions arise naturally in semigroup theory.

Let $G$ be a group acting by isometries on the left of a semimetric space $X$.
We write $gx$ for the image of $x \in X$ under the action of $g \in G$.

\begin{definition}[Cocompact action]
An action by isometries of a group $G$ on a semimetric space $X$ is
called \emph{cocompact} if there is a strong ball $B$ of
finite radius such that $\{ gB: g \in G \}$ covers $X$.
\end{definition}

\begin{lemma}\label{proper}
Let $G$ be a group acting by isometries on a semimetric space $(X,d)$. Then the following are equivalent:
\begin{enumerate}
\item[(i)] for every out-ball $B$ of finite radius the set
$
\{ g \in G: d(B, gB) = 0 \}
$
is finite;
\item[(ii)] for every out-ball $B$ of finite radius the set
$
\{ g \in G: d(gB, B) = 0 \}
$
is finite;
\item[(iii)] for every out-ball $B$ of finite radius the set
$
\{ g \in G: B \cap gB \neq \varnothing \}
$
is finite.
\end{enumerate}
The analogous statement also holds both for in-balls, and for strong balls.
\end{lemma}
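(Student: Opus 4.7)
The strategy is to dispose of two trivial implications by a set-theoretic inclusion and to obtain the remaining ones by enlarging the ball and invoking (iii); the in-ball and strong-ball analogues will then follow by the same argument with the direction of the triangle inequality adjusted to match the type of ball. First I would observe that any $x \in B \cap gB$ witnesses both $d(B,gB) = 0$ and $d(gB, B) = 0$ via $d(x,x) = 0$, giving the set inclusion
$$
\{g \in G : B \cap gB \neq \varnothing\} \ \subseteq \ \{g \in G : d(B, gB) = 0\} \cap \{g \in G : d(gB, B) = 0\},
$$
so both (i) $\Rightarrow$ (iii) and (ii) $\Rightarrow$ (iii) are immediate.

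For (iii) $\Rightarrow$ (i), given $B = \outball_r(x_0)$ of finite radius, I would apply (iii) to the enlarged out-ball $B' = \outball_{r+1}(x_0)$ (still of finite radius) and show the inclusion $\{g : d(B, gB) = 0\} \subseteq \{g : B' \cap gB' \neq \varnothing\}$. If $d(B, gB) = 0$, the definition of the infimum produces $b, y \in B$ with $d(b, gy) < 1$, and the triangle inequality then gives $d(x_0, gy) \leq d(x_0, b) + d(b, gy) < r + 1$, so $gy \in B' \cap gB'$. The implication (iii) $\Rightarrow$ (ii) is the mirror image: picking $b, y \in B$ with $d(gy, b) < 1$ and using the isometry relation $d(gx_0, gy) = d(x_0, y) \leq r$, one gets $d(gx_0, b) < r + 1$, hence $d(x_0, g^{-1}b) < r + 1$ by isometry again, so that $b \in gB' \cap B$.

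For in-balls the analogous enlargement $B' = \inball_{r+1}(x_0)$ works, this time with the triangle inequality applied in the form $d(b, gx_0) \leq d(b, gy) + d(gy, gx_0)$ together with $d(gy, gx_0) = d(y, x_0) \leq r$, so that the witness $b$ satisfies $d(g^{-1}b, x_0) < r + 1$ and lies in $gB' \cap B$. For strong balls, which are intersections of in-balls with out-balls, both estimates are simultaneously available with $B' = \strongball_{r+1}(x_0)$. The main (minor) obstacle is really just bookkeeping: because $d$ is asymmetric, at each step one must apply the triangle inequality and the isometry of the action in the specific direction that connects to the defining inequality of the relevant type of ball, otherwise the crucial containment in $B'$ or in $gB'$ fails to follow.
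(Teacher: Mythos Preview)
Your proof is correct and follows essentially the same strategy as the paper's: both establish (i)$\Rightarrow$(iii) and (ii)$\Rightarrow$(iii) from the trivial observation that a common point witnesses zero distance in either order, and both obtain the converse implications by passing to a slightly larger ball $C=\outball_{r+\epsilon}(x_0)$ (the paper leaves $\epsilon>0$ arbitrary where you take $\epsilon=1$) and showing the relevant set is contained in $\{g: C\cap gC\neq\varnothing\}$. Your treatment of (iii)$\Rightarrow$(ii) and of the in-ball case is a touch more explicit than the paper's, which simply says ``similar'', but the underlying argument is identical.
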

\begin{proof}
We shall prove the result for out-balls. The results for in-balls and strong balls may be dealt with using similar arguments.

Let $B = \outball_r(x_0)$ be an out-ball and let $g \in G$. If $B \cap gB \neq \varnothing$ then $d(B,gB)=d(gB,B) = 0$. This shows $(\mathrm{(i)} \Rightarrow \mathrm{(iii)})$ and $(\mathrm{(ii)} \Rightarrow \mathrm{(iii)})$. 
To see $(\mathrm{(iii)} \Rightarrow \mathrm{(i)})$ suppose that $d(B,gB) = 0$. Set $C = \outball_{r+\epsilon} (x_0)$ for some fixed $\epsilon > 0$. Then there exist $x \in B$ and $y \in gB$ with $d(x,y) < \epsilon$ which implies
$
y \in C \cap gB \subseteq C \cap gC
$
and hence $C \cap gC \neq \varnothing$.
Therefore 
\[
\{g \in G: d(B,gB) = 0  \} \subseteq \{ g \in G: C \cap gC \neq \varnothing  \}
\]
which is finite by (iii). The proof that (iii) implies (ii) is similar.
\end{proof}

\begin{definition}[Proper action]
We say that $G$ is acting \emph{outward properly} (respectively \emph{inward properly} or \emph{properly}) on $X$ if one of the equivalent conditions given in Lemma~\ref{proper} holds.
\end{definition}

\begin{lemma}\label{lem_proper}
Let $G$ be a group acting by isometries on a semimetric space $X$. If $G$
is acting outward properly or inward properly then $G$ is acting properly.
\end{lemma}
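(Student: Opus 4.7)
The plan is to use characterization (iii) in Lemma~\ref{proper} for the conclusion, and to exploit the obvious containments
\[
\strongball_r(x_0) \;\subseteq\; \outball_r(x_0), \qquad \strongball_r(x_0) \;\subseteq\; \inball_r(x_0)
\]
that come straight from the definition of a strong ball.

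Suppose first that $G$ acts outward properly, and let $B = \strongball_r(x_0)$ be an arbitrary strong ball of finite radius. Set $B' = \outball_r(x_0)$, so that $B \subseteq B'$, and hence $gB \subseteq gB'$ for every $g \in G$ since $G$ acts by isometries. Therefore
\[
B \cap gB \;\subseteq\; B' \cap gB',
\]
so whenever $B \cap gB \neq \varnothing$ we also have $B' \cap gB' \neq \varnothing$. Consequently
\[
\{ g \in G : B \cap gB \neq \varnothing \} \;\subseteq\; \{ g \in G : B' \cap gB' \neq \varnothing \},
\]
and the right-hand set is finite because $G$ acts outward properly (condition (iii) for out-balls). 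Hence condition (iii) of Lemma~\ref{proper} for strong balls is satisfied, so $G$ acts properly.

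The case of an inward proper action is entirely analogous, taking $B' = \inball_r(x_0)$ in place of the out-ball and invoking the in-ball version of Lemma~\ref{proper}. There is no substantive obstacle: the entire argument is a one-line containment once Lemma~\ref{proper} has been established, which is precisely why it is worth isolating the equivalences of that lemma before stating this corollary.
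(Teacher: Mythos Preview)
Your proof is correct and follows essentially the same route as the paper's: take an arbitrary strong ball $B = \strongball_r(x_0)$, enlarge it to the out-ball $C = \outball_r(x_0)$, and use the containment $\{ g : B \cap gB \neq \varnothing \} \subseteq \{ g : C \cap gC \neq \varnothing \}$ together with condition~(iii) of Lemma~\ref{proper}. The only cosmetic difference is that the paper omits the intermediate observation $gB \subseteq gC$ (which, incidentally, needs only that $g$ is a function, not that it is an isometry).
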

\begin{proof}
Suppose that $G$ is acting outward properly. Let $B$ be a strong ball of finite radius, say $B = \overrightarrow{\mathcal{B}}_r(x_0) \cap \overleftarrow{\mathcal{B}}_r(x_0)$. Let $C = \overrightarrow{\mathcal{B}}_r(x_0)$. Then
\[
\{ g \in G: B \cap gB \neq \varnothing  \} \subseteq
\{ g \in G: C \cap gC \neq \varnothing \}
\]
which is finite since $G$ is acting outward properly.
\end{proof}

The converse of Lemma~\ref{lem_proper} is not true in general. 

In the usual way we regard a finitely generated group as a metric space via its Cayley graph. The quasi-isometry class of a finitely generated group is well-defined since Cayley graphs with respect to different finite generating sets are quasi-isometric. 
More generally this is true for finitely generated semigroups as we shall see in Section~\ref{sec_graphs}.
  
We now prove the main result of this section which is a \v{S}varc--Milnor lemma for groups acting on
semimetric spaces.

\begin{theorem}\label{Svarc-Milnor}
Let $G$ be a group acting outward properly (or inward properly) and
cocompactly by isometries on a geodesic semimetric space $X$ with basepoint.
Then $G$ is a finitely 
generated group quasi-isometric to $X$. In particular, $X$ is a 
quasi-metric space.
\end{theorem}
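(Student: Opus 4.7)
The plan is to follow the structure of the classical Švarc-Milnor argument while tracking the asymmetry of $d$ carefully, focusing on the outward proper case; the inward proper case is symmetric, obtained by reversing all geodesics and exchanging out-balls for in-balls. First I would arrange that my reference point is a basepoint sitting inside the covering strong ball. Given a basepoint $x_0$ and a strong ball $B=\mathcal{B}_r(y_0)$ whose translates cover $X$, some $hB$ contains $x_0$, so replacing $x_0$ by $h^{-1}x_0\in B$ (still a basepoint, since $d(h^{-1}x_0,y)=d(x_0,hy)$) puts us in a setting where the orbit $Gx_0$ is strongly $\mu$-quasi-dense in $X$ for $\mu=2r$.

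Next I would introduce the candidate generating set
$$S=\{g\in G\setminus\{1\}:d(x_0,gx_0)\leq 2\mu+1\}$$
together with its symmetrization $S'=S\cup S^{-1}$. Finiteness of $S$ follows from outward properness applied to the out-ball $\outball_{2\mu+1}(x_0)$: for any $g\in S$, the point $gx_0$ witnesses that this out-ball meets its $g$-translate. To prove $S$ generates $G$, I would take $g\in G$, invoke the geodesic hypothesis to produce a path $p:[0,n]\to X$ from $x_0$ to $gx_0$ (available since $x_0$ is a basepoint, so $d(x_0,gx_0)<\infty$), subdivide $[0,n]$ into unit-length intervals at points $t_0<\cdots<t_m$, and use strong quasi-density of $Gx_0$ to pick representatives $g_ix_0$ within distance $\mu$ in both directions of $p(t_i)$, with $g_0=1$ and $g_m=g$. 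The forward inequality from the geodesic then gives $d(x_0,g_{i-1}^{-1}g_ix_0)\leq\mu+1+\mu$, so each successive quotient lies in $S\cup\{1\}$, telescoping to $g=s_1\cdots s_m$ and yielding the crucial estimate $|g|_{S'}\leq|g|_S\leq d(x_0,gx_0)+1$.

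For the quasi-isometry itself I would use the orbit map $\phi:G\to X$, $g\mapsto gx_0$, where $G$ carries the word metric $d_{S'}$ (an honest metric since $S'$ is symmetric). Its image is $\mu$-quasi-dense by construction, and the isometric action reduces the two quasi-isometry inequalities to comparing $|k|_{S'}$ and $d(x_0,kx_0)$ for a single $k\in G$. The lower bound on $d(x_0,kx_0)$ is exactly the output of the geodesic subdivision above; for the upper bound I would expand $k$ as a shortest word in $S'$, telescope using the isometric action, and absorb the resulting bounds into the multiplicative constant $C_1=\max_{s'\in S'}d(x_0,s'x_0)$, which is finite because $S'$ is finite and each term is finite. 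Together these establish that $\phi$ is a quasi-isometry; since $(G,d_{S'})$ is a genuine metric space, Proposition~\ref{quasimetric} then yields the final claim that $X$ is quasi-metric.

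The main obstacle is precisely the asymmetry of $d$: controlling $d(x_0,gx_0)$ via a geodesic gives no bound on the reverse distance $d(gx_0,x_0)$, so the natural set $S$ is not symmetric and its inverses $S^{-1}$ must be handled separately. The saving grace is that the orbit of a basepoint consists entirely of basepoints, so $d(sx_0,x_0)=d(x_0,s^{-1}x_0)$ is at least finite for every $s\in S$; finiteness of $S$ then lets one absorb all these asymmetric distances into the single constant $C_1$. A parallel subtlety occurs inside the geodesic subdivision — the reverse distance $d(p(t_i),p(t_{i-1}))$ is uncontrolled — but since only the forward bound is needed to certify $s_i\in S$, this does not obstruct the argument.
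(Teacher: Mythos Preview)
Your proof is correct and follows essentially the same classical \v{S}varc--Milnor structure as the paper's: arrange for a basepoint in the covering ball, subdivide a geodesic from $x_0$ to $gx_0$ to exhibit a finite generating set with the bound $|g|\leq d(x_0,gx_0)+O(1)$, and verify that the orbit map is a quasi-isometry. The only notable difference is bookkeeping: the paper takes $S=\{g:d(B,gB)=0\}$ and must first prove an auxiliary threshold lemma (via the set $Q$ and a larger out-ball $C$) to make the subdivision land in $S$, whereas your choice $S=\{g:d(x_0,gx_0)\leq 2\mu+1\}$ together with the explicit symmetrization $S'=S\cup S^{-1}$ sidesteps that detour and lets you conclude quasi-metricity directly from Proposition~\ref{quasimetric} rather than Corollary~\ref{corol_quasi-metricity}.
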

\begin{proof}
Since $G$ is acting cocompactly there is a strong ball $B$, based at $x_0$
say, of radius $R$ such that $(gB)_{g \in G}$ covers $X$. 
Now $X$ contains a basepoint ($b$ say), which must lie in $gB$ for some
$g \in G$; since the action is by isometries it follows that $g^{-1} b$ is
a basepoint in $B$, and since $B$ is strongly connected we deduce that
$x_0$ is also a basepoint.
Now let
\[
S = \{ g \in G : d(B,gB) = 0 \}.
\]
By Lemma~\ref{lem_proper}, $G$ is acting properly, so the set $S$ is
finite. Clearly $e \in S$, where $e$ denotes the identity element of
$G$ (but it is \textit{not} necessarily the case that $S$ is closed under the
taking of inverses).

Let $C = \overrightarrow{\mathcal{B}}_{5R}(x_0)$, noting that $B \subseteq C$
and define
\[
Q = \{ gB: d(B, gB) \neq 0 \ \mbox{and} \ d(C,gB)=0 \}.
\]
Note that $Q$ is finite, since it is contained in $\{ gB: d(C,gC)=0 \}$,
which is finite since the action is outward proper. Hence, we may choose
a positive real number $r$ such that $r < R$ and $r < d(B,gB)$ for
every $gB \in Q$.

We claim $r$ has the property that for all
$h \in G$ if $d(B,hB)<r$ then $d(B,hB)=0$. To see this, suppose on the contrary that $d(B,hB)<r$ but that $d(B,hB) \neq 0$. Since   $d(B,hB)<r$ there exist $u \in B$ and $v \in hB$ with $d(u,v)<r$. Since $u \in B$ we have $d(x_0,u) \leq R$. Therefore
\[
d(x_0,v) \leq d(x_0,u) + d(u,v) < R + r \leq 2R < 5R
\]
so that $v \in C = \overrightarrow{\mathcal{B}}_{5R}(x_0)$.
Thus $v \in C \cap hB$ so $d(C,hB)=0$ and $hB \in Q$. By the choice of
$r$ it follows that $r < d(B,hB)$. But this contradicts the assumption
that $d(B,hB)<r$, and so proves the claim.

Now choose a positive real number $l < r$. We claim that $S$ generates $G$ and that for all $g \in G$
\[
d_S(e,g) \leq \frac{1}{l}d(x_0,g x_0) +1.
\]
To see this, let $g \in G$ be arbitrary. Since the semimetric space $X$ is
geodesic and $x_0$ is a basepoint, we may choose points $x_1, x_2, \ldots, x_{k+1} = gx_0$ such that $d(x_i,x_{i+1})=l$
for $0 \leq i < k$ and $d(x_k,x_{k+1}) = l' \leq l$.
Define $g_0 = e$, $g_{k+1} = g$ and for $1 \leq i \leq k$ choose $g_i \in G$
such that $x_i \in g_i B$; such choices are possible since $(gB)_{g \in G}$
covers $X$. Now for $0 \leq i \leq k$ set $s_i = g_i^{-1} g_{i+1}$ and observe
that
$$g = g_0 g_0^{-1} g_1 g_1^{-1} g_2 g_2^{-1} \ldots g_k g_k^{-1} g_{k+1} = e s_0 s_1 \ldots s_k.$$
Since $d(x_i,x_{i+1}) \leq l < r$, $x_i \in g_i B$, and $x_{i+1} \in g_{i+1} B$ it follows that $d(g_i B, g_{i+1} B)<r$ which,
since the group is acting by isometries, yields $d(B, g_i^{-1} g_{i+1} B)<r$.
By the above claim we conclude that $d(B, g_i^{-1} g_{i+1} B)=0$ and thus $s_i = g_i^{-1} g_{i+1} \in S$. This proves that $S$ generates the group $G$.

Now $d_S(e,g) \leq k+1$, since we have written $g$ as a product of $k+1$
generators, and $kl = d(x_0,gx_0) - l'$ where $l' \leq l$. So
\[
d_S(e,g) \leq k+1 = \frac{1}{l} d(x_0,g x_0) + (1-\frac{l'}{l}) \leq
\frac{1}{l}d(x_0,g x_0) +1.
\]

Conversely, an easy inductive argument shows that for all $g \in G$, we
have $d(x_0, g x_0) \leq \lambda d_S(e, g)$ for all $g \in G$, where
$\lambda = \max\{d(x_0,sx_0) \mid s \in S \}$.

Now consider the mapping $f: G \rightarrow X$ defined by $g \mapsto g x_0$. It follows from the observations above that
\[
d_S(g_1,g_2) \leq \frac{1}{l} d(f(g_1), f(g_2)) +1
\]
and also
\[
d(f(g_1),f(g_2)) \leq \lambda d_S(g_1,g_2)
\]
for all $g_1, g_2 \in G$. Moreover, given $x \in X$, since $(\alpha B)_{\alpha \in G}$ covers $X$ we conclude that there exists $h \in G$ with $x \in hB$, and thus
\[
\max(d(f(h),x),d(x,f(h))) \leq R.
\]
Hence $G$ and $X$ are quasi-isometric, and since the Cayley graph of $G$ is
a quasi-metric space, by Corollary~\ref{corol_quasi-metricity}, $X$ is a quasi-metric space.
\end{proof}

As an alternative proof strategy for Theorem~\ref{Svarc-Milnor}, one might
think to first prove directly that $X$ must be quasimetric, before deducing
that $G$ acts properly and cocompactly by isometries on the corresponding
metric space constructed in the proof of Proposition~\ref{quasimetric} and
applying the usual \v{S}varc-Milnor Lemma. However, there seems to be no
easier way to prove that $X$ is quasimetric than by establishing 
Theorem~\ref{Svarc-Milnor}.

\section{Directed Graphs and Semigroups as Semimetric Spaces}\label{sec_graphs}

In this section we consider a particularly important class of semimetric
spaces, namely those which arise from directed graphs.

By a \textit{(directed) graph} $\Gamma$ we mean a set $R$ of \textit{vertices}
together with a set
$E$ of \textit{edges} and two functions $\iota : E \to R$ and
$\tau : E \to R$ which describe respectively the \textit{initial
vertex} (or \textit{source}) and the \textit{terminal vertex} (or \textit{target})
of each edge. Note that this definition permits loops and multiple edges,
and places no cardinality restrictions on the vertex or edge sets.

It is easily verified that the vertex set of a directed graph $\Gamma$ is
an example of a semimetric space, with the distance $d(x,y)$ between vertices
$x$ and $y$ defined to be the infimum number of edges in a directed path from
$x$ to $y$ ($\infty$ if there is no path). For our purposes, it will also
be convenient to regard a directed graph as a \textit{geodesic}
space, to facilitate the application of \textit{continuous} arguments. To
this end, given a directed graph $\Gamma$ we define a new semimetric
space $\Gamma^*$ with point set $\Gamma^* = R \cup (E \times (0, 1))$ and metric defined
as follows. If $x,y \in R$ are vertices in $\Gamma$ then $d(x,y)$ is the
shortest length of a path from $x$ to $y$ in $\Gamma$.
Otherwise we define
\begin{eqnarray*}
d((e,\mu),y) & = & (1-\mu) + d(\tau(e),y) \\
d(x,(e,\mu)) & = & d(x,\iota(e)) + \mu
\end{eqnarray*}
\[
d((e,\mu),(f,\nu)) = \begin{cases}
\nu - \mu & \mbox{if} \ e=f \ \mbox{and} \ \nu \geq \mu \\
d(\tau(e),\iota(f)) + (1-\mu) + \nu & \mbox{otherwise}.
\end{cases}
\]
The following elementary relation between $\Gamma$ and $\Gamma^*$ is now
easily established.
\begin{proposition}\label{prop_graphembed}
Let $\Gamma$ be a directed graph. Then $\Gamma^*$ is a geodesic semimetric space and the
inclusion $\Gamma \subseteq \Gamma^*$ is an isometric embedding.
\end{proposition}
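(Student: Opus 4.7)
The plan is to dispatch the three parts of the claim in order. The isometric embedding property is immediate from the construction: the restriction of the distance function on $\Gamma^*$ to pairs of vertices coincides, by definition, with the shortest directed-path distance in $\Gamma$, so the inclusion $\Gamma \subseteq \Gamma^*$ preserves distances exactly.

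For the semimetric axioms, I would first verify positivity by case analysis on the types of $x$ and $y$ (vertex or edge-interior of the form $(e,\mu)$ with $\mu \in (0,1)$). When $x$ and $y$ are distinct vertices this reduces to the graph-distance version, where $d(x,y)=0$ forces a length-zero edge sequence and hence $x=y$. In every mixed case the defining formula contains a summand $\mu$ or $1-\mu$, both strictly positive because $\mu \in (0,1)$, so $d(x,y)>0$. For two edge-interior points on the same edge the ``same edge'' branch gives $\nu-\mu$, vanishing only when $(e,\mu)=(f,\nu)$; the ``otherwise'' branch is bounded below by $1-\mu > 0$.

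For the triangle inequality $d(x,z) \leq d(x,y)+d(y,z)$, I would split on the type of the intermediate point $y$. If $y$ is a vertex, each of the three formulas decomposes schematically as ``head from $x$ to a vertex $u$, plus a graph-distance in $\Gamma$, plus a tail from a vertex $v$ to $z$'', and the triangle inequality for the shortest-path distance in $\Gamma$ applied to $u,y,v$ closes the argument. If $y=(f,\nu)$ lies inside an edge, then $d(x,y)$ ends with a summand $d(\cdot,\iota(f))+\nu$ while $d(y,z)$ begins with $(1-\nu)+d(\tau(f),\cdot)$, so $d(x,y)+d(y,z)$ contains $d(\cdot,\iota(f))+1+d(\tau(f),\cdot)$, which in turn bounds the graph-distance in $\Gamma$ obtained by traversing the edge $f$; substituting back into the formula for $d(x,z)$ gives the required inequality. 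The ``same edge'' sub-case (when $x$ or $z$ also lies on $f$) must be handled separately but is straightforward.

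For geodesicity, given $x,z \in \Gamma^*$ with $L=d(x,z)<\infty$, I would realise the distance by a concrete route: a shortest edge sequence in $\Gamma$ between the relevant vertices appearing in the formula for $d(x,z)$, prepended with the remaining portion $1-\mu$ of the edge containing $x$ (when $x=(e,\mu)$) and appended with the initial portion $\nu$ of the edge containing $z$ (when $z=(f,\nu)$). Parametrising this route at unit speed produces $p:[0,L]\to\Gamma^*$ with $p(0)=x$ and $p(L)=z$; for $a\leq b$ the restriction $p|_{[a,b]}$ is itself a route of the same kind witnessing $d(p(a),p(b))\leq b-a$, which is precisely the geodesic condition. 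The main obstacle is organising the triangle-inequality case analysis; no individual case is difficult, but the number of branches (vertex vs.\ edge-interior for each of $x,y,z$, with the ``same edge'' degeneracies) demands care to avoid omissions.
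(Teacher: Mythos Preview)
Your proof is correct. The paper provides no proof of this proposition, merely asserting beforehand that it is ``easily established''; your case-by-case verification of the semimetric axioms and the explicit unit-speed parametrisation for geodesics constitute exactly the routine argument the reader is implicitly expected to supply.
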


Notice that the inclusion of $\Gamma$ into $\Gamma^*$ is \textbf{not} in
general a quasi-isometry. Since a typical point on a edge is
``near'' each of its endpoints in only \textbf{one} direction, there is
no reason to suppose it lies in a strong ball about any vertex, let alone
one of uniformly bounded radius. In fact, it is easy to show that the
inclusion of $\Gamma$ into $\Gamma^*$ is a quasi-isometry if and
only if $\Gamma$ (and hence $\Gamma^*$ by Corollary~\ref{corol_quasi-metricity}) is quasi-metric.

We now turn our attention to some semimetric spaces which arise naturally
in semigroup theory. Let $S$ be a semigroup generated by a finite subset $A$.
Then $S$ is naturally endowed with the structure of a semimetric space,
with distance function defined by
\[
d_A (x,y) = \inf\{ |w| : w \in A^*, \ xw = y  \}
\]
where $A^*$ denotes the free monoid over $A$, and $|w|$ is the length of the word $w$. 
Note that $d_A (x,y) = \infty$ is possible since a semigroup can have proper right ideals. 
For $s \in S = \langle A \rangle$ we use $l_A(s)$ to denote the minimal length of a word over $A^+$ that represents the element $s$. 

A slightly more sophisticated semimetric space may be obtained by
considering the {(right) Cayley graph} of $S$ with respect to $A$ which is the
edge-labelled directed graph whose vertices are the elements of $S$ and with
a directed edge from $x \in S$ to $y \in S$ labelled by $a \in A$ if and only
if $xa = y$ in $S$. We denote by $\Gamma(S,A)$ the geodesic semimetric space
obtained by applying the $*$-operation to this graph.

It follows from Proposition~\ref{prop_graphembed} that the natural inclusion
of $S$ into $\Gamma(S,A)$ is an isometric embedding, but in general there is
no reason to suppose it is a quasi-isometry.
Moreover, the semimetric space $\Gamma(S,A)$ carries more information than
the semimetric space $(S,d_A)$, since it contains information about
edge-multiplicities. So in general, it is impossible to
reconstruct $\Gamma(S,A)$ from $(S,d_A)$, while conversely $(S,d_A)$
can be obtained from $\Gamma(S,A)$ just by restricting to the set of vertices.

There are also the obvious dual notion of left Cayley graph which has the same vertex set but different semimetric. 
For finitely generated groups the corresponding left and right versions of each of these spaces are isometric so nothing is lost by always working just with right Cayley graphs. For finitely generated semigroups the right and left Cayley graphs are not, in general, isometric. In fact, they may not even be quasi-isometric (this is easily seen for example by taking a semigroup with a different number of $\gr$-classes than $\gl$-classes, in the sense defined below). From now on, unless otherwise stated,
by Cayley graph we shall always mean right Cayley graph.

\begin{proposition}\label{Independence}
Let $A$ and $B$ be finite generating sets for a semigroup $S$. Then $(S,d_A)$ is quasi-isometric to $(S,d_B)$.
\end{proposition}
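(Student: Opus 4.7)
The plan is to show that the identity map $\mathrm{id}:(S,d_A)\to(S,d_B)$ is itself a quasi-isometry. Since this map is surjective, its image is automatically $0$-quasi-dense, so the task reduces to producing constants $\lambda \geq 1$ and $\epsilon > 0$ such that
$$\frac{1}{\lambda}\,d_A(x,y) - \epsilon \;\leq\; d_B(x,y) \;\leq\; \lambda\, d_A(x,y) + \epsilon$$
for all $x,y \in S$; the map will then be a $(\lambda,\epsilon,0)$-quasi-isometry.

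To define $\lambda$, I would exploit the fact that each generating set can express the other. Since $B$ generates $S$, for each $a \in A$ we may choose a word $\phi(a) \in B^+$ evaluating to $a$ in $S$; symmetrically, for each $b \in B$ we choose $\psi(b) \in A^+$ evaluating to $b$. As $A$ and $B$ are finite, the numbers $M = \max_{a \in A} |\phi(a)|$ and $N = \max_{b \in B} |\psi(b)|$ are finite and at least $1$. The key step is a simple expansion argument: if $d_A(x,y) = k < \infty$, witnessed by $w = a_1 a_2 \cdots a_k \in A^+$ with $xw = y$ in $S$, then concatenating the chosen expansions yields a word $w' = \phi(a_1)\phi(a_2)\cdots\phi(a_k) \in B^+$ of length at most $Mk$ which still satisfies $xw' = y$. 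Hence $d_B(x,y) \leq M\,d_A(x,y)$, and the symmetric argument using $\psi$ gives $d_A(x,y) \leq N\,d_B(x,y)$. Setting $\lambda = \max(M,N)$ and choosing any $\epsilon > 0$ yields the desired two-sided bound.

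The one mild subtlety absent from the group-theoretic analogue is that $d_A(x,y)$ and $d_B(x,y)$ may genuinely be $\infty$, since a finitely generated semigroup need not have any word from $A^*$ or $B^*$ carrying $x$ to $y$ (for instance when the elements lie in distinct right ideals). The expansion step above automatically respects this dichotomy, however: a finite $A$-witness produces a finite $B$-witness (of length bounded by a factor of $M$) and vice versa, so the two distance functions agree on whether they take the value $\infty$, in which case both sides of the required inequality are $\infty$ and the estimate holds vacuously. I do not anticipate any serious obstacle: the argument is essentially the standard one used for finitely generated groups, adapted only to respect the one-sided (right-multiplicative) nature of the semigroup action and to allow infinite distances.
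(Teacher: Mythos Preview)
Your proposal is correct and follows essentially the same approach as the paper: both use the identity map and the constants $M=\max_{a\in A}l_B(a)$, $N=\max_{b\in B}l_A(b)$ to obtain the two-sided Lipschitz estimate, concluding that the identity is a $(\max(M,N),\epsilon,0)$-quasi-isometry. Your explicit treatment of the $d=\infty$ case and your care in taking $\epsilon>0$ (rather than $\epsilon=0$, which the paper's definition technically excludes) are minor refinements, but the argument is otherwise identical.
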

\begin{proof}
Let $f$ be the identity mapping on $S$, viewed as a map from $(S,d_B)$ to
$(S,d_A)$. Since $f$ is surjective its image is quasi-dense in
$(S,d_A)$. Set $\lambda_1 = \max\{ l_A(b) : b \in B  \}$ and
$\lambda_2 = \max\{ l_B(a) : a \in A  \}$. For all $x,y \in S$ it is
easy to check by induction on $d_B(x,y)$ that $d_A(f(x),f(y)) \leq \lambda_1 d_B(x,y)$.  Similarly $d_B(x,y) \leq \lambda_2 d_A(f(x),f(y))$.
We conclude that $f$ is a $(\max\{ \lambda_1, \lambda_2 \}, 0, 0)$-quasi-isometry.
\end{proof}

However, in contrast to groups, if $A$ and $B$ are finite generating sets
for a semigroup $S$ then $\Gamma(S,A)$ and $\Gamma(S,B)$ need not be
quasi-isometric (see Example~\ref{Example_Cayley_Graph} below). 

\begin{definition}\label{def_qsi}
Let $S$ and $T$ be a finitely generated semigroups with finite generating sets $A$ and $B$, respectively. We say that the semigroups
$S$ and $T$ are (right) quasi-isometric if $(S,d_{A})$ and $(T,d_{B})$ are quasi-isometric.
\end{definition}
Again, there is a dual notion of two semigroups being left quasi-isometric. Here we shall work only with right quasi-isometries between semigroups and by quasi-isometric we shall always mean right quasi-isometric.
Note that as a result of the dependence on choice of generating set described above, we cannot use the right Cayley graph $\Gamma(S,A)$ in the above definition.

For a semigroup $S$ we use $S^1$ to denote the monoid $S \cup \{ 1 \}$ where $1$ is an adjoined identity, assumed to be disjoint from $S$. 
Let $S$ and $T$ be finitely generated semigroups. It is easy to see that $S$ and $T$ are quasi-isometric if and only if $S^1$ and $T^1$ are quasi-isometric. (For the less trivial of the two directions, one just has to observe that any quasi-isometry $f:S^1 \rightarrow T^1$ must map $1_S$ to $1_T$ since these are the unique basepoints in the respective Cayley graphs.) In our discussions below there are various situations where we shall find it convenient to work with monoids rather than semigroups, and from this observation we see that no generality is lost in doing so. 

Associated with any semimetric space $X$ is a natural preorder $\lesssim$ relation given by $x \lesssim y$ if and only if $d(y,x) < \infty$. Let $\sim$ denote the equivalence relation given by $x \sim y$ if and only if
$x \lesssim y$ and $y \lesssim x$. We call the $\sim$-classes the \emph{strongly connected components} of $X$. Given a semimetric space $X$ let $X / \sim$ denote the poset of equivalence classes of strongly connected components of $X$. The following proposition is an immediate consequence of the
definition of quasi-isometry.

\begin{proposition}\label{prop_components}
Let $f: X \rightarrow Y$ be a quasi-isometry of semimetric spaces. Then $f$ maps each of the  $\sim$-classes of $X$ quasi-isometrically into a $\sim$-class of $Y$, and this induces an isomorphism of partially ordered sets $(X / \sim) \rightarrow (Y / \sim)$.
\end{proposition}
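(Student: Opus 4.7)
My plan is to exploit the simple fact that a quasi-isometry preserves finiteness of distances in both directions, and then verify routine poset properties. Writing $f:X\to Y$ as a $(\lambda,\epsilon,\mu)$-quasi-isometry, the upper bound $d'(f(x),f(y)) \leq \lambda d(x,y)+\epsilon$ shows that $d(x,y)<\infty$ implies $d'(f(x),f(y))<\infty$, and the lower bound rearranges to $d(x,y) \leq \lambda(d'(f(x),f(y))+\epsilon)$, yielding the converse. Hence $x\sim y$ in $X$ if and only if $f(x)\sim f(y)$ in $Y$, and this biconditional will drive the rest of the argument.

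For the first claim, I would fix a $\sim$-class $C\subseteq X$. By the biconditional, $f(C)$ is contained in a single $\sim$-class $D\subseteq Y$, and the two quasi-isometric inequalities are inherited by any restriction, so $f|_C:C\to D$ is automatically a quasi-isometric embedding. To verify that it is actually a quasi-isometry, given $y\in D$ I would use quasi-density of $f(X)$ in $Y$ to produce $x\in X$ with $\max(d'(f(x),y),d'(y,f(x))) \leq \mu$; the biconditional forces $f(x)\sim y$, hence $f(x)\in D$, hence $x\in C$, so that $f(C)$ is in fact $\mu$-quasi-dense in $D$.

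For the poset statement, define $\bar f:X/{\sim}\to Y/{\sim}$ by $\bar f([x])=[f(x)]$. Well-definedness and injectivity are immediate from the biconditional, while surjectivity follows because for any $y\in Y$ quasi-density produces some $x$ with $f(x)\sim y$, so $\bar f([x])=[y]$. For order-preservation, if $[x_1]\lesssim[x_2]$ then by definition there are representatives with $d(x_2,x_1)<\infty$, and the upper bound yields $d'(f(x_2),f(x_1))<\infty$; the mirror argument using the lower bound shows that $\bar f^{-1}$ is also order-preserving.

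The only genuinely delicate point I anticipate is the quasi-density step above: one must be careful that the approximating point $x$ for a given $y\in D$ actually lies in $C$, rather than in some other $\sim$-class of $X$. This hinges on the lower bound of the quasi-isometry (equivalently, on the injectivity of $\bar f$); without it one could only conclude that $f(C)$ is quasi-dense in all of $Y$, not in the specific component $D$, and the first half of the proposition would fail. Everything else reduces to direct substitution into the quasi-isometry inequalities.
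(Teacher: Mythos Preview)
Your proposal is correct and is precisely the kind of direct verification the paper has in mind; the paper gives no proof at all, merely asserting that the proposition ``is an immediate consequence of the definition of quasi-isometry.'' Your argument spells out those details faithfully, and your attention to the quasi-density step (showing that the approximating point $x$ for $y\in D$ must lie in $C$, via the biconditional) is exactly the point one would need to check to upgrade ``quasi-isometric embedding'' to ``quasi-isometry'' on each component.
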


\begin{example} \label{Example_Cayley_Graph}
Let $S$ be the two element semigroup $\{ a, 0 \}$ where $a^2=0$ and $a0 = 0a = 0^2 = 0$. Then $A = \{ a \}$ and $B = \{ a,0 \}$ are both generating sets for $S$ but $\Gamma(S,A)$ and $\Gamma(S,B)$ are not quasi-isometric. Indeed, the poset associated to $\Gamma(S,A)$ has the property that any pair of points is comparable, while this is not the case in the poset associated with $\Gamma(S,B)$. Hence by Proposition~\ref{prop_components} the spaces $\Gamma(S,A)$ and $\Gamma(S,B)$ are not quasi-isometric. 
\end{example}

In particular it follows from Proposition~\ref{prop_components} that if two semigroups $S$ and $T$ are
quasi-isometric then the partially ordered sets $S / \gr$ and $T / \gr$ must be isomorphic (where $\gr$ denotes Green's $\gr$-relation defined below).

A central question in geometric group theory is that of which algebraic
properties are invariant under quasi-isometry. Such properties are called
\emph{geometric} and well-known examples include finiteness, the number of
ends, having a free subgroup of finite index, being
finitely presented, 
being hyperbolic, being automatic, being amenable, being accessible, the
type of growth (see \cite[p115, Section 50]{delaHarpe2000} and the references therein), 
having an abelian subgroup
of finite index (see \cite{Bridson1996} and \cite{Pansu1983}), having a nilpotent subgroup of finite index (see \cite{Gromov1981}), being finitely presented with solvable word problem, and satisfying the homological finiteness condition $F_n$
or the condition $FP_n$ (see \cite{Alonso1990} and \cite{Alonso1994}).

It is natural to ask which properties are quasi-isometry invariants
of semigroups. Certain properties, like being finite, or having finitely many
right ideals, are clearly quasi-isometry invariants (the latter
observation follows from Proposition~\ref{prop_components}). 
We shall see below that several important properties of finitely generated semigroups 
are invariant under quasi-isometry. 

\section{Sch\"{u}tzenberger Groups and Graphs} \label{sec_apps}

In this section we demonstrate how the theory developed in the preceding
section can be applied to some problems in the theory of finitely generated
semigroups and monoids. For any undefined concepts from semigroup theory we 
refer the reader to \cite{Howie1995}. 

The notions of Sch\"{u}tzenberger graph, and Sch\"{u}tzenberger group, lie
at the heart of recent developments in geometric approaches in semigroup
theory; see Steinberg \cite{Steinberg2001, Steinberg2003}. In \cite{Ruskuc2000} it was shown that under a certain finiteness assumption (see below) a Sch\"{u}tzenberger group of a finitely generated monoid will be finitely generated. In fact, as pointed out in \cite{Ruskuc2000}, this result also follows from Sch\"{u}tzenberger's original work \cite{Schutzenberger1957, Schutzenberger1958}. In \cite{Steinberg2003} a topological proof of the same result was given for the special case of maximal subgroups of inverse semigroups. Here by considering the natural action of the Sch\"{u}tzenberger group on its Sch\"{u}tzenberger graph, applying the
\v{S}varc--Milnor lemma of Section~\ref{sec_groups} we shall obtain an alternative proof of this result. 
At the same time we recover information relating the geometry of the Sch\"{u}tzenberger graph to that of the Sch\"{u}tzenberger group. This relationship is used below when we consider quasi-isometry invariants of semigroups.

First we must introduce some ideas and terminology from semigroup theory. Green's relations were introduced in \cite{Green1951}, and ever since have played a fundamental role in the structure theory of semigroups. We give a brief overview of the theory here, for more details we refer
the reader to \cite{Howie1995}.

On any monoid $M$, we may define a pre-order $\leq_\gr$ by $x \leq_\gr y$
if and only if $xM \subseteq yM$. The relation $\gr$ is defined to be the
least equivalence relation containing $\leq_\gr$, so $x \gr y$ if and only
if $x$ and $y$ generate the same principal right ideal. If $M$ is generated
by $A$ then the $\gr$-classes are the strongly connected components of
$(M,d_A)$. A pre-order
$\leq_\gl$ and equivalence relation $\gl$ can be defined in the obvious
left-right dual way, and the intersection $\gr \cap \gl$ (which is also
an equivalence relation) is denoted $\gh$.

The importance of the $\gh$-relation becomes apparent when considering the
maximal subgroups of a monoid; those $\gh$-classes which contain
idempotents are exactly the maximal subgroups of the containing monoid.
It is possible to associate a group to
any $\gh$-class of a monoid, which is called the \emph{Sch\"{u}tzenberger
group} of the $\gh$-class. If the $\gh$-class happens to be a subgroup then
this group will be isomorphic to the Sch\"{u}tzenberger group of the
$\gh$-class, so the notion of Sch\"{u}tzenberger group generalises that
of maximal subgroup. The (left) Sch\"{u}tzenberger group is obtained by
taking the action of the setwise stabiliser of $H$ on $H$, under left
multiplication by elements of the monoid, and making it faithful. That
is, given an arbitrary $\gh$-class $H$ of $M$, let $\mathrm{Stab}(H) = \{ s \in S : sH=H \}$ denote the \emph{(left) stabilizer} of $H$ in $S$.
Then define an equivalence
$\sigma=\sigma(H)$ on the stabilizer by $(x,y) \in \sigma$ if and only if $xh = yh$ for all $h\in H$. It is straightforward to verify that $\sigma$ is a congruence, and that $\mathcal{G}(H)=\mathrm{Stab}(H) / \sigma$ is a group,
called the \emph{left Sch\"{u}tzenberger group} of $H$.
One can also define the right Sch\"{u}tzenberger group of $H$ in the natural
way, and it turns out that the left and right Sch\"{u}tzenberger groups are isomorphic to one another.
For information about the basic properties of Sch\"{u}tzenberger groups we
refer the reader to \cite[Section~2.3]{Lallement1979}. In particular we recall
here that the orbits of the action of $\mathcal{G}$ on $R$ are precisely
the $\gh$-classes of $S$ contained in $R$, and that the action of
$\mathcal{G}$ on such an $\gh$-class is \textit{semiregular} in the sense of
permutation group theory, that is, that only the identity has a fixed point.

The \emph{Sch\"{u}tzenberger graph} $\Gamma(R,A)$ of $R$, with respect to
$A$, is the strongly connected component of $h \in H$ in $\Gamma(M,A)$.
It is easily seen to consist of those vertices which are elements of $R$,
together with edges connecting them, and so can be obtained by beginning
with a directed graph $\Delta$ with vertex set $R$ and a directed labelled edge from $x$ to $y$ labelled by $a \in A$ if and only if $xa = y$, and then setting $\Gamma(R,A) = \Delta^*$ (using the notation from Section~3). From its construction it is clear that for any generating set $A$ of $M$, $\Gamma(R,A)$ is a connected geodesic semimetric space. 

Now the group $\mathcal{G}(H)$ acts naturally on $R$ via
$(s / \sigma) \cdot r = sr$. This action extends naturally to an action
by isometries of $\mathcal{G}(H)$ on the Sch\"{u}tzenberger graph
$\Gamma(R,A)$. Thus, the group $\mathcal{G}(H)$ acts by isometries on
the strongly connected geodesic semimetric space $\Gamma(R,A)$.

\begin{theorem}\label{thm_schutzact}
Let $M$ be a monoid generated by a finite set $A$, let $H$ be an $\gh$-class of $M$,
let $G$ be the  Sch\"{u}tzenberger group of $H$, and let $\Gamma(R,A)$ denote the
Sch\"{u}tzenberger graph of the $\gr$-class $R$ containing $H$. Then the left
translation action of $G$ on $\Gamma(R,A)$ is outward proper and by isometries. The
action is cocompact if and only if $R$ contains only finitely many
$\gh$-classes.
\end{theorem}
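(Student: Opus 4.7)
The plan is to verify the three claims in turn: that the action is by isometries, that it is outward proper, and that cocompactness is equivalent to $R$ containing finitely many $\gh$-classes. The isometry claim is essentially definitional: the action of $G = \mathrm{Stab}(H)/\sigma$ on $R$ by left multiplication sends any edge from $x$ to $xa$ labelled by $a$ to an edge from $gx$ to $g(xa) = (gx)a$ labelled by $a$, so restricts to a label-preserving graph automorphism of the Cayley subgraph on $R$; this extends canonically to an isometric self-map of the geodesic realisation $\Gamma(R,A)$.

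For outward properness I would verify condition~(iii) of Lemma~\ref{proper}. First, an arbitrary out-ball $\outball_r(x_0)$ is contained in a vertex-based out-ball $\outball_{r+1}(v_0)$ (taking $v_0 = x_0$ if $x_0$ is already a vertex, and $v_0 = \iota(e)$ if $x_0 = (e,\mu)$), so it suffices to prove the condition for vertex-based out-balls. Fix $B = \outball_r(v_0)$ with $v_0 \in R$. The vertex set $V_B$ of $B$ is finite, because each vertex of $\Gamma(R,A)$ has out-degree at most $|A|$. The key geometric observation is that $d(v_0,(e,\mu)) = d(v_0,\iota(e)) + \mu$, so every edge-interior point of $B$ has its source vertex in $B$ as well; applying the same remark to $gB = \outball_r(gv_0)$ shows that any nonempty intersection $B \cap gB$ necessarily contains a vertex $v$, and then $g^{-1}v$ is also a vertex of $B$. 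Since the orbits of $G$ on $R$ are the $\gh$-classes and the action on each $\gh$-class is semiregular, $g$ is uniquely determined by the pair $(v, g^{-1}v) \in V_B \times V_B$; hence $\{g \in G : B \cap gB \neq \varnothing\}$ is finite.

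For cocompactness, suppose first that $R$ contains only finitely many $\gh$-classes, with vertex representatives $h_1,\ldots,h_n$. All pairwise distances between the finitely many vertices $h_i$ and $h_i a$ (for those $a \in A$ with $h_i a \in R$) are finite, since $R$ is a single $\gr$-class, so one can choose a radius $r$ so large that the strong ball $B = \strongball_r(h_1)$ contains every $h_i$ together with every point of every edge from some $h_i$ to $h_i a$ inside $\Gamma(R,A)$. Since $G$ acts transitively on each $\gh$-class, every vertex of $R$ has the form $gh_i$ and every edge-interior point has the analogous form $g \cdot (e,\mu)$ for a half-edge inside $B$; thus $\{gB : g \in G\}$ covers $\Gamma(R,A)$. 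Conversely, if some strong ball $B$ of finite radius has this covering property then its vertex set $V_B$ is finite and meets only finitely many $\gh$-classes, and since $G$ preserves the $\gh$-classes setwise the same finite list of $\gh$-classes occurs in every translate $gB$; this forces $R$ to have only finitely many $\gh$-classes.

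The main delicate point is in the cocompactness argument: the asymmetry of the distance function forces one to take a simultaneous maximum over \emph{both} forward and backward distances between the basepoint $h_1$ and each of the vertices $h_i$ and $h_i a$, in order to fit all of the chosen local neighbourhoods inside a single strong ball. The semiregularity of the $G$-action on each $\gh$-class is what allows one to pin down $g$ uniquely in the outward-properness step; without it, one could only conclude that $g$ lay in a bounded-size coset.
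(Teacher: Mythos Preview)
Your proposal is correct and follows essentially the same strategy as the paper: verify the action is by isometries via left multiplication commuting with right multiplication by generators, establish outward properness by reducing to finitely many vertices in an out-ball and invoking semiregularity of the action, and characterise cocompactness via the fact that orbits are $\gh$-classes. Your handling of edge-interior points is in fact slightly cleaner than the paper's --- you observe directly that any out-ball based at a vertex which contains $(e,\mu)$ must also contain $\iota(e)$, so $B\cap gB$ always contains a vertex, whereas the paper passes to an enlarged ball; similarly, in the cocompactness direction you explicitly ensure that the strong ball swallows entire edges rather than appealing to the (not-quite-immediate in the asymmetric setting) claim that every point is within distance~$1$ of a vertex.
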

\begin{proof}
To show that the action is by isometries, it clearly suffices to show
that the left translation action of $G$ on $R$ is by isometries. If
$x, y \in R$ are such that $d(x,y) = n$ then there exist $a_1, \dots, a_n \in A$ such that
$x a_1 \dots a_n = y$. But then
$$(s / \sigma) \cdot x a_1 \dots a_n = s x a_1 \dots a_n = s y = (s / \sigma) \cdot y$$
so that $d((s / \sigma) \cdot x, (s / \sigma) \cdot y) \leq n = d(x,y)$. A similar
argument using $(s / \sigma)^{-1}$ shows that
$d((s / \sigma) \cdot x, (s / \sigma) \cdot y) \geq d(x,y)$.

To prove that the action is outward proper we must show that
for each $x_0 \in R$ and $0 \leq \epsilon < \infty$ the set
\[
Q = \{ g \in G \ | \  g \outball_\epsilon(x_0) \cap \outball_\epsilon(x_0) \neq \varnothing \}
\]
is finite. Since $\Gamma(R,A)$ is strongly connected, it will clearly suffice
to fix a basepoint $x_0 = h \in H$ and prove the claim for that $x_0$ and
every $0 \leq \epsilon < \infty$.

Set $B = \outball_\epsilon(x_0)$ noting that $B$ contains only
finitely many vertices of $\Gamma(R,A)$ since the out-degree of every
vertex is bounded above by $|A|$. Let $\{x_1, \ldots, x_n\}$ be the set of vertices in $B$.
Let $g \in Q$. By definition
there exists $b \in B$ such that $gb \in B$. First suppose that $b$ is a vertex, so that $b=x_i$ for some $i$. Since under the action of $G$ on $\Gamma(R,A)$ vertices are mapped to vertices,
we have $gb = x_j$ for some $j$. But since $G$ acts on $R$ with trivial point stabilizers, it follows that $g$ is uniquely determined by the pair $(x_i,x_j)$ so must be one of a fixed set of $n^2$ group elements.

If $b$ is not a vertex, say $b = (e,\mu)$ then consider the strong ball of radius $\epsilon + \mu$ based at $\iota(e)$ and apply the argument of the previous paragraph. This completes the proof that the action is
outward proper.

For the second part of the theorem, suppose that $R$ is a union of finitely
many $\gh$-classes. Let $x_0 = h \in H$. Since $(R,d_A)$ is strongly connected
and $R$ is a union of finitely
many $\gh$-classes it follows that there exists $\lambda \geq 0$ such that the strong ball $\mathcal{B}_{\lambda}(x_0)$ intersects every $\gh$-class in $R$. 
Since the orbits of $G$ on $R$ are the $\gh$-classes in $R$ it follows that the translates $(g \strongball_\lambda(x_0) )_{g \in G}$ cover all the vertices of $\Gamma(R,A)$. Since every point of $\Gamma(R,A)$ is within
distance $1$ of a vertex, it follows that with $\epsilon = \lambda + 1$ the translates $(g \strongball_\epsilon(x_0))_{g \in G}$ cover $\Gamma(R,A)$. Hence the action is cocompact.

For the converse, suppose that the action is cocompact. Then there is a strong ball $B$  of finite radius whose translates
$(gB)_{g \in G}$ cover $\Gamma(R,A)$. Since $B$ has finite radius, it only intersects finitely many of the $\gh$-classes in $R$. As the $\gh$-classes of $R$ are the orbits under the action of $G$ on $R$, it follows that any vertex $h \in gB$ belongs to one of the finitely many $\gh$-classes that $B$ intersects. Since $(gB)_{g \in G}$ covers $\Gamma(R,A)$ we conclude that $B$ intersects every $\gh$-class in $R$, and thus $R$ is a union of finitely many $\gh$-classes.
\end{proof}

Theorems about finitely generated semigroups are often proved by technical,
combinatorial means which, while convincing, yield relatively little insight
into \textit{why} the results hold.
A case in point is a theorem of Ruskuc \cite{Ruskuc2000}, originally proved
using a Reidemeister-Schreier argument, which states that in a finitely generated monoid,
if an $\gh$-class $H$ lies in an $\gr$-class containing only finitely many $\gh$-classes, then the Sch\"{u}tzenberger group of $H$ is finitely generated. 
Steinberg \cite{Steinberg2003}
has shown that this fact has a geometric interpretation in the special
case of inverse semigroups. The following stronger statement
arises as an immediate corollary of Theorems~\ref{Svarc-Milnor} and
\ref{thm_schutzact}; as well as being an alternative proof, we would
argue that it provides also a more satisfactory \textit{explanation}
for this phenomenon.

\begin{theorem}\label{quasiisomschutz}
Let $M$ be a monoid generated by a finite set $A$, and let $H$ be an
$\gh$-class of $M$. If the $\gr$-class $R$ that contains $H$ has only
finitely many $\gh$-classes then the Sch\"{u}tzenberger group $G$ of
$H$ is finitely generated and quasi-isometric to the Sch\"{u}tzenberger
graph $\Gamma(R,A)$.
\end{theorem}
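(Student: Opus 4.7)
The plan is to deduce this theorem as a direct combination of Theorem~\ref{thm_schutzact} and the \v{S}varc--Milnor lemma (Theorem~\ref{Svarc-Milnor}), so almost all the real work has already been done. First, I would observe that $\Gamma(R,A)$ is the ambient object on which the \v{S}varc--Milnor machinery will be applied: by the discussion preceding Theorem~\ref{thm_schutzact} (and Proposition~\ref{prop_graphembed}), it is a geodesic semimetric space, and since $R$ is a single $\gr$-class (that is, a strongly connected component of $(M,d_A)$), the space $\Gamma(R,A)$ is strongly connected. In particular, every point of $\Gamma(R,A)$ is a basepoint, so the basepoint hypothesis of Theorem~\ref{Svarc-Milnor} is automatically satisfied.

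Next, I would feed the Sch\"{u}tzenberger group $G = \mathcal{G}(H)$ and its natural left-translation action on $\Gamma(R,A)$ into Theorem~\ref{thm_schutzact}. That theorem tells us two things: first, the action is by isometries and is outward proper, with no extra hypotheses; second, because by assumption $R$ contains only finitely many $\gh$-classes, the same theorem yields that the action is cocompact. Thus all the hypotheses of the \v{S}varc--Milnor lemma hold for the pair $(G, \Gamma(R,A))$.

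Applying Theorem~\ref{Svarc-Milnor}, we conclude immediately that $G$ is finitely generated and that $G$ (equipped with the word metric coming from a finite generating set) is quasi-isometric to $\Gamma(R,A)$, which is exactly the two assertions of the theorem. The only step that needs any thought at all is the verification that the hypotheses of Theorem~\ref{thm_schutzact} and Theorem~\ref{Svarc-Milnor} match up correctly (outward properness plus cocompactness plus geodesic-with-basepoint), and this is a direct unpacking; there is no genuine obstacle, which is precisely why the authors flag the statement as an \emph{immediate corollary}. The only mild subtlety worth noting explicitly in the write-up is why $\Gamma(R,A)$ has a basepoint, and this follows from strong connectedness of $R$ together with the construction of $\Gamma(R,A)$ as $\Delta^*$ of a graph with vertex set $R$.
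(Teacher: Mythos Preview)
Your proposal is correct and matches the paper's approach exactly: the paper states the theorem as an ``immediate corollary of Theorems~\ref{Svarc-Milnor} and \ref{thm_schutzact}'' and gives no further argument. Your write-up even makes explicit the one point the paper leaves implicit, namely that $\Gamma(R,A)$ is strongly connected and hence has a basepoint.
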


It follows from the comment after Proposition~\ref{prop_graphembed} that under the assumptions of Theorem~\ref{quasiisomschutz}, $\mathcal{G}(H)$, $\Gamma(R,A)$ and $(R,d_A)$ are all quasi-isometric to one another.
Therefore by the argument of Proposition~\ref{Independence}, if $R$ has only finitely many $\gh$-classes, and $A$ and $B$ are any two finite generating sets for $M$, then the semimetric spaces $\Gamma(R,A)$ and $\Gamma(R,B)$ are quasi-isometric.

Theorem~\ref{quasiisomschutz} is an example of a situation where one may apply the \v{S}varc--Milnor lemma of Section~\ref{sec_groups} and thus avoid having to prove
a Reidemeister-Schreier rewriting result. We mention also two other situations where Theorem~\ref{Svarc-Milnor} may be similarly applied.

In \cite[Lemma~6.3]{Steinberg2009} the author proves (in the terminology defined in that paper) that if $S$ is a finitely generated inverse semigroup acting non-degenerately on a locally compact Hausdorff space $X$ and the orbit $\mathcal{O}$ of a point $x \in X$ is finite, then the isotropy group $G_x$ (defined in \cite[Definition~6.1]{Steinberg2009}) is itself finitely generated. As the author points out, the isotropy group $G_x$ acts naturally on the right of $L_x = d^{-1}(x) = \{ [s,x] : s \in S \}$. The set $L_x$ plays the role of the $\gl$-class in this setting. The set $L_x$ is the vertex set of a digraph (which is the analogue of the left Sch\"{u}tzenberger graph) where there is a directed edge from $[s_1,x]$ to $[s_2,x]$ labelled by $[a,s_1 x]$ when $s_1, s_2 \in S$ satisfy $as_1 = s_2$. The action of the isotropy group $G_x$ on $L_x$ extends naturally to an action by isometries on this digraph, viewed as a semimetric space, and when the orbit $\mathcal{O}$ is finite the action is cocompact. Then as in Theorem~\ref{quasiisomschutz} above, one may deduce that the isotropy group $G_x$ is finitely generated as an application of Theorem~\ref{Svarc-Milnor}. 

In \cite{Gray2008} Green's relations and Sch\"{u}tzenberger groups are considered, but taken relative to a subsemigroup of a semigroup. Given a semigroup $S$ and a subsemigroup $T$ we write 
\[
u \mathcal{R}^T v \Leftrightarrow uT^1 = vT^1, \quad u \mathcal{L}^T v \Leftrightarrow T^1 u = T^1 v
\] 
and $\mathcal{H}^T = \mathcal{R}^T \cap \mathcal{L}^T$. In \cite{Gray2008} the \emph{Green index} of $T$ in $S$ is defined to be one more than the number of $\mathcal{H}^T$-classes in $S \setminus T$. There are natural corresponding notions of $T$-relative Sch\"{u}tzenberger groups and Sch\"{u}tzenberger graphs. In this context, if $T$ is finitely generated by a set $A$, $R$ is an $\mathcal{R}^T$-class of $S$, and  $H$ is an $\mathcal{H}^T$-class of $S$ contained in $R$, then the (left) $T$-relative Sch\"{u}tzenberger group $\Gamma(H)$ of $H$ acts by isometries on the (right) $T$-relative Sch\"{u}tzenberger graph $\Gamma(R,A)$, and this action is proper by \cite[Proposition~5]{Gray2008}. When $R$ is a union of finitely many $\mathcal{H}^T$-classes this action is cocompact and, just as in Theorem~\ref{quasiisomschutz} above, it follows that $\Gamma(H)$ is finitely generated and quasi-isometric to $\Gamma(R,A)$. In particular this shows that if $T$ is finitely generated and has finite Green index in $S$, then all of the relative Sch\"{u}tzenberger groups of $\mathcal{H}^T$-classes in $S \setminus T$ are finitely generated. 

In general Sch\"{u}tzenberger graphs can be very far away from being quasi-isometric to groups, as the
following straightforward  examples show.

\begin{example}
Let $T_{\mathbb{Z}}$ denote the full transformation monoid on the set $\mathbb{Z}$ of integers. The elements of $T_{\mathbb{Z}}$ are the maps from $\mathbb{Z}$ to itself, and multiplication is given by usual composition of maps, where we view maps as acting on the right, and compose from left to right. Let $\alpha$ and $\alpha^{-1}$ denote the infinite cycles given by 
\[
n \alpha = n+1, \quad n \alpha^{-1} = n-1 \quad \mbox{for} \ n \in \mathbb{Z},
\]
and for $i \in \mathbb{Z}$ let $\gamma_i$ denote the constant mapping with image $i$. Set $A = \{ \alpha, \alpha^{-1}, \gamma_0 \}$ and define
\[
S = \langle A \rangle = \{ \alpha^{n} : n \in \mathbb{Z} \} \cup \{ \gamma_i : i \in \mathbb{Z} \}. 
\] 
The $\gr$-class of $\gamma_0$ is the set of all constant maps $R = \{ \gamma_i : i \in \mathbb{Z} \}$. 
We claim that the Sch\"{u}tzenberger graph $\Gamma(R,A)$ is not quasi-metric. Indeed, for all $i \in \mathbb{Z}$, $d(\gamma_n, \gamma_0) = 1$ (since $\gamma_n \gamma_0 = \gamma_0$) while $d(\gamma_0, \gamma_n) = |n|$ (the unique shortest directed path from $\gamma_0$ to $\gamma_n$ is the one labelled by the word $\alpha^n$). 

Also, in this example the Sch\"{u}tzenberger group of $R$ is the trivial group while the Sch\"{u}tzenberger graph is infinite. 
\end{example}

\begin{example}\label{ex_bicyclic}
Consider the bicyclic monoid $B$ defined by the finite presentation $\langle b,c \ | \ bc=1 \rangle$, with respect to its usual generating set $A = \{ b,c \}$. The Sch\"{u}tzenberger graph $\Gamma(R_1,A)$ containing the identity element is a one-way infinite line with directed edges in both directions between adjacent vertices of the line. By the indegree of a vertex $v$ of the digraph $\Gamma(R_1,A)$ we mean the number of vertices $w$ such that there is a directed edge from $w$ to $v$. Dually we define the outdegree. In $\Gamma(R_1,A)$ all the vertices have finite indegree and outdegree, there is a unique vertex (namely the identity of $B$) with indegree and outdegree $1$ while all other vertices have indegree and outdegree $2$. The space $\Gamma(R_1,A)$ is quasi-metric, but it is not quasi-isometric to the corresponding Sch\"{u}tzenberger group which is trivial.
\end{example}

Theorem~\ref{quasiisomschutz} has
some consequences regarding quasi-isometry invariants of finitely generated monoids.

\begin{theorem}\label{finite_presentability_special_case}
For finitely generated monoids with finitely many left and right ideals,
finite presentability is a quasi-isometry invariant.
\end{theorem}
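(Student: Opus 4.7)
The plan is to reduce the statement to a known theorem for groups by means of the Sch\"utzenberger machinery. The central fact to invoke is Ruskuc's theorem from \cite{Ruskuc2000}: a finitely generated monoid with finitely many left and right ideals is finitely presented if and only if all of its Sch\"utzenberger groups are finitely presented. Granted this, the task reduces to showing that the Sch\"utzenberger groups of two quasi-isometric monoids (both satisfying our hypothesis) are pairwise quasi-isometric, since finite presentability is known to be a quasi-isometry invariant of finitely generated groups \cite{Alonso1994}.

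The next step is to match up the Sch\"utzenberger groups geometrically. Let $f : M \to N$ be a quasi-isometry, where both $M$ and $N$ have finitely many left and right ideals. Since the strongly connected components of $(M,d_A)$ are precisely the $\gr$-classes of $M$ (and similarly for $N$), Proposition~\ref{prop_components} provides a poset isomorphism $M/\gr \to N/\gr$ under which each $\gr$-class $R$ of $M$ is mapped quasi-isometrically to its image $\gr$-class $R'$ of $N$. Because $M$ and $N$ have only finitely many $\gh$-classes, every $\gr$-class in either monoid contains only finitely many $\gh$-classes, so Theorem~\ref{quasiisomschutz} (together with the remark following it) applies to every $\gr$-class in either monoid and gives that $R$ is quasi-isometric to the Sch\"utzenberger group $\mathcal{G}(R)$ of any $\gh$-class in $R$, and similarly $R'$ is quasi-isometric to $\mathcal{G}(R')$.

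Composing these quasi-isometries (using transitivity from Proposition~\ref{prop_qsiprops}) yields a quasi-isometry $\mathcal{G}(R) \to \mathcal{G}(R')$ for each pair of corresponding $\gr$-classes. Now suppose $M$ is finitely presented. By Ruskuc's theorem, every $\mathcal{G}(R)$ is finitely presented; by quasi-isometry invariance of finite presentability for groups \cite{Alonso1994}, every $\mathcal{G}(R')$ is finitely presented; and a final application of Ruskuc's theorem (this time to $N$, which is legitimate because the finiteness hypothesis on ideals is part of the standing assumption on $N$) shows that $N$ is finitely presented.

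The argument itself is very short once the ingredients are in place. The main obstacle, and indeed the reason the hypothesis on left and right ideals is needed, is the reduction of finite presentability to the Sch\"utzenberger groups via Ruskuc's theorem; without that finiteness assumption one cannot expect to read off global presentability from the local group data, and even assembling local presentations into a global one requires the nontrivial Reidemeister--Schreier style rewriting of \cite{Ruskuc2000}. The geometric half of the argument is by contrast essentially formal, relying only on Proposition~\ref{prop_components}, Theorem~\ref{quasiisomschutz}, and transitivity of quasi-isometry.
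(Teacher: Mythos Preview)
Your proof is correct and follows essentially the same route as the paper: reduce to Sch\"utzenberger groups via Ru\v{s}kuc's theorem, use Proposition~\ref{prop_components} to match $\gr$-classes under the quasi-isometry, apply Theorem~\ref{quasiisomschutz} (and the remark after it) to identify each $\gr$-class with its Sch\"utzenberger group up to quasi-isometry, and conclude by the quasi-isometry invariance of finite presentability for groups. The only cosmetic differences are the direction of the quasi-isometry (the paper takes $f:N\to M$ and picks an arbitrary $\gr$-class of $N$, you take $f:M\to N$ and rely on the poset isomorphism being a bijection) and the reference for the group-theoretic fact (the paper cites \cite{delaHarpe2000} rather than \cite{Alonso1994}).
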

\begin{proof}
Let $M$, $N$ be quasi-isometric finitely generated monoids, each with finitely many left and right ideals, and with finite generating sets $A$, $B$ respectively. Suppose that $M$ is finitely presented. We want to show that $N$ is finitely presented. Let $f: (N,d_B) \rightarrow (M,d_A)$ be a quasi-isometry. Let $R$ be an $\gr$-class of $N$. By Proposition~\ref{prop_components}, $f(R)$ is contained in an $\gr$-class, $K$ say, of $M$. Let $H \subseteq R$ be an $\gh$-class of $N$, and let $U \subseteq K$ be an $\gh$-class of $M$. Then by Theorem~\ref{quasiisomschutz} and transitivity of quasi-isometry we see that the semimetric spaces $\mathcal{G}(H)$, $(R,d_A)$, $(K,d_B)$ and $\mathcal{G}(U)$ are all quasi-isometric to one another. Thus the groups $\mathcal{G}(H)$ and $\mathcal{G}(U)$ are quasi-isometric. By assumption, $M$ is finitely presented which by \cite[Theorem~1.1]{Ruskuc2000} implies that $\mathcal{G}(U)$ is finitely presented. Since
having a finite presentation is a quasi-isometry invariant for groups (see \cite[p115, Section 50]{delaHarpe2000}) it follows that $\mathcal{G}(H)$ is finitely presented. As $H$ was an arbitrary $\gh$-class of $N$ it follows that all Sch\"{u}tzenberger groups of $N$  are finitely presented which, along with the fact that $N$ has finitely many left and right ideals, by \cite[Theorem~1.1]{Ruskuc2000} implies that $N$ is finitely presented.
\end{proof}

For certain important classes of semigroups, finite generation is sufficient
to guarantee that there are only finitely many $\gl$- and $\gr$-classes,
and so in these cases finite presentability is a quasi-isometry invariant.

\begin{corollary}
Finite presentability is a 
quasi-isometry invariant for finitely generated Clifford monoids, and for 
finitely generated completely simple and completely $0$-simple semigroups.
\end{corollary}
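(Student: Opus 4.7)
The plan is to apply Theorem~\ref{finite_presentability_special_case} in each of the three cases. Since $S$ and $S^1$ are quasi-isometric, and finite presentability transfers freely between a semigroup and the monoid obtained by adjoining an identity, one may work with monoids throughout. The theorem then reduces matters to showing that, in each of the three classes, finite generation forces there to be only finitely many $\gl$- and $\gr$-classes; the conclusion will then be immediate.

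For Clifford monoids I would appeal to the standard decomposition: any Clifford semigroup is a strong semilattice $Y$ of groups $(G_\alpha)_{\alpha \in Y}$, and Green's relations $\gr$, $\gl$, $\gh$ and $\gd$ all coincide with the equivalence whose classes are precisely the groups $G_\alpha$. Hence the number of $\gr$- and $\gl$-classes equals $|Y|$. The semilattice $Y$ is a homomorphic image of the monoid, so it inherits a finite generating set, and any semilattice generated by $n$ elements has at most $2^n$ elements (commutativity and idempotency reduce every product to $\prod_{i \in J} e_i$ for some $J \subseteq \{1, \dots, n\}$). Thus $|Y|$ is finite, as required.

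For completely simple and completely $0$-simple semigroups, the Rees--Suschkewitsch structure theorem presents $S$ as a Rees matrix semigroup $\mathcal{M}^{(0)}[G; I, \Lambda; P]$, in which the $\gr$-classes are indexed by $I$ and the $\gl$-classes by $\Lambda$. Because the product $(i, g, \lambda)(j, h, \mu) = (i, g p_{\lambda j} h, \mu)$ inherits its first coordinate from the leftmost factor and its third from the rightmost, any first coordinate occurring in $S$ must already occur as the first coordinate of some generator, forcing $I$ to be finite; the argument for $\Lambda$ is symmetric. Adjoining an identity (if needed) introduces at most one extra $\gr$- and one extra $\gl$-class, so $S^1$ still satisfies the hypothesis of Theorem~\ref{finite_presentability_special_case}. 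No substantial obstacle is anticipated: the heavy lifting is all contained in Theorem~\ref{finite_presentability_special_case}, and the verifications above are immediate consequences of the classical Clifford and Rees--Suschkewitsch structure theorems.
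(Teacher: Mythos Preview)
Your proposal is correct and matches the paper's approach exactly: the paper states this corollary without proof, remarking just before it that in these classes finite generation forces finitely many $\gl$- and $\gr$-classes, and just after it that the passage between $S$ and $S^1$ (via the comment following Definition~\ref{def_qsi}) handles the non-monoid completely ($0$-)simple case. Your argument simply fills in the standard structural verifications (strong semilattice decomposition for Clifford monoids, Rees--Suschkewitsch for completely ($0$-)simple semigroups) that the paper leaves implicit.
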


Note that although completely ($0$-)simple semigroups are not in general monoids, by the comment made after Definition~\ref{def_qsi}, Theorem~\ref{finite_presentability_special_case} can still be applied to them.  

\begin{question}
Is finite presentability a quasi-isometry invariant of finitely generated semigroups in general?
\end{question}

A natural first step towards answering this question would be to first establish
whether finite presentability is an \emph{isometry} invariant of finitely generated semigroups.
We finish the section with a few variants of Theorem~\ref{finite_presentability_special_case}.

\begin{theorem}\label{thm_sol_wp}
For finitely generated monoids with 
finitely many left and right ideals, the property of being finite 
presented with solvable word problem is a quasi-isometry invariant. 
\end{theorem}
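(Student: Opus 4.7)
The plan is to extend the proof of Theorem~\ref{finite_presentability_special_case} by pairing finite presentability of $N$, which it already supplies, with an additional layer that passes solvability of the word problem through the Schützenberger groups. Roughly: reduce the monoid word problem to word problems of Schützenberger groups, then transfer those across the quasi-isometry via the group-theoretic fact that solvability of the word problem is a quasi-isometry invariant among finitely presented groups.

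Let $M$ and $N$ be quasi-isometric finitely generated monoids with finitely many $\gl$- and $\gr$-classes, with $M$ finitely presented and having solvable word problem. Theorem~\ref{finite_presentability_special_case} already gives that $N$ is finitely presented, so the remaining task is solvability of the word problem for $N$. The first ingredient I would invoke is a word-problem analogue of the Ruskuc finite-presentability theorem \cite[Theorem~1.1]{Ruskuc2000}: a finitely presented monoid with only finitely many $\gl$- and $\gr$-classes has solvable word problem if and only if each of its Schützenberger groups is finitely presented with solvable word problem. The forward direction is routine; the reverse rests on the same Reidemeister--Schreier-style rewriting that underpins \cite[Theorem~1.1]{Ruskuc2000}, together with the fact that the egg-box structure of such a monoid is effectively computable from a finite presentation.

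Granting this reduction, the transfer argument mirrors the proof of Theorem~\ref{finite_presentability_special_case}. Fix a quasi-isometry $f:(N,d_B)\to(M,d_A)$. For each $\gh$-class $H$ of $N$, contained in an $\gr$-class $R$, Proposition~\ref{prop_components} gives an $\gr$-class $K$ of $M$ containing $f(R)$; choose any $\gh$-class $U\subseteq K$. Theorem~\ref{quasiisomschutz} together with transitivity of quasi-isometry then produces
\[
\mathcal{G}(H)\sim(R,d_B)\sim(K,d_A)\sim\mathcal{G}(U),
\]
so the Schützenberger groups $\mathcal{G}(H)$ and $\mathcal{G}(U)$ are quasi-isometric, and both are finitely presented by \cite[Theorem~1.1]{Ruskuc2000}. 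Since $\mathcal{G}(U)$ has solvable word problem by the reduction applied to $M$, and since solvability of the word problem is a quasi-isometry invariant within the class of finitely presented groups (see \cite[p.~115, Section~50]{delaHarpe2000}), $\mathcal{G}(H)$ has solvable word problem. Applying the converse direction of the reduction to $N$ then delivers solvability of the word problem for $N$.

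The step I expect to be the main obstacle is the word-problem reduction itself: verifying, or locating in the literature, that under the finite-$\gl$/-$\gr$ hypothesis, solvability of the monoid's word problem is equivalent to solvability of the word problems of all of its Schützenberger groups. The substantive content is effectivising the Reidemeister--Schreier construction, so that from a finite presentation of the monoid one obtains a computable system of $\gh$-class transversals together with rewriting rules reducing equality of monoid elements to equality in the appropriate Schützenberger group. Once this structural reduction is in hand the quasi-isometry transfer is immediate from the group-theoretic input, so all of the genuinely new work lies in that reduction rather than in the quasi-isometry argument.
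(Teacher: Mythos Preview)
Your proposal is correct and follows essentially the same route as the paper: reduce to the Sch\"utzenberger groups via the Ruskuc-type equivalence (the paper extracts this from the \emph{proof} of \cite[Theorem~3.2]{Ruskuc2000} rather than Theorem~1.1), transfer solvability across the quasi-isometry of Sch\"utzenberger groups exactly as in Theorem~\ref{finite_presentability_special_case}, and invoke the group-theoretic quasi-isometry invariance of ``finitely presented with solvable word problem'' (the paper cites \cite{Alonso1990} for this last step). The obstacle you flag---the word-problem reduction---is precisely the point the paper addresses by appealing to Ruskuc's argument, so your identification of where the real work lies is accurate.
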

\begin{proof}
Although not explicitly stated there, it is an 
easy consequence of the proof of \cite[Theorem~3.2]{Ruskuc2000} that for a monoid 
$M$ with finitely many left and right ideals, $M$ is finitely presented 
with solvable word problem if and only if all of its Sch\"{u}tzenberger 
groups are finitely presented with solvable word problem. Now the result 
follows by an argument identical to the one given in the proof of 
Theorem~\ref{finite_presentability_special_case}, along with the result of \cite{Alonso1990} stating that being finitely presented with solvable word problem is a quasi-isometry invariant of groups. 
\end{proof}

Recall that a semigroup is \textit{regular} if every $\gr$-class contains an idempotent.
In \cite{Alonso1994} it was shown that the homological finiteness condition $FP_n$ is a quasi-isometry invariant of finitely generated groups. Along with the result in \cite{GrayMalheiro} for regular monoids with finitely many left and right ideals, and the result of \cite{Cremanns1996} saying that for finitely presented groups the homotopical finiteness condition \emph{finite derivation type} and $FP_3$ are equivalent, using the same argument as in the proof of Theorem~\ref{finite_presentability_special_case} we obtain the following. For the definition of finite derivation type and more on its importance in the theory of string-rewriting systems and connections to the theory of diagram groups see \cite{Guba1997, Otto1997}. 

\begin{theorem}\label{thm_fdt}
For finitely generated regular monoids with finitely many left and right ideals having finite derivation type is a quasi-isometry invariant.
\end{theorem}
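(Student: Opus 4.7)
The plan is to follow exactly the template established in the proofs of Theorem~\ref{finite_presentability_special_case} and Theorem~\ref{thm_sol_wp}: reduce the problem from the monoids $M$ and $N$ to their Sch\"{u}tzenberger groups via a result of \cite{GrayMalheiro}, transfer quasi-isometry of the monoids to quasi-isometry of corresponding Sch\"{u}tzenberger groups via Theorem~\ref{quasiisomschutz} and Proposition~\ref{prop_components}, then exploit the fact that finite derivation type coincides with $FP_3$ for finitely presented groups (by \cite{Cremanns1996}), which is quasi-isometry invariant (by \cite{Alonso1994}).

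In detail, let $M$ and $N$ be quasi-isometric finitely generated regular monoids each with finitely many left and right ideals, with finite generating sets $A$ and $B$ respectively, and assume $M$ has finite derivation type. Since finite derivation type implies finite presentability, and by Theorem~\ref{finite_presentability_special_case} finite presentability is a quasi-isometry invariant in this class, both $M$ and $N$ are finitely presented. The key algebraic ingredient from \cite{GrayMalheiro} is that a finitely presented regular monoid with finitely many left and right ideals has finite derivation type if and only if all of its Sch\"{u}tzenberger groups (which, by \cite[Theorem~1.1]{Ruskuc2000}, are automatically finitely presented) have finite derivation type. Applying this to $M$ shows that every Sch\"{u}tzenberger group $\mathcal{G}(U)$ of $M$ is finitely presented and has finite derivation type, and so by \cite{Cremanns1996} satisfies $FP_3$.

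Now fix any $\gh$-class $H$ of $N$ with containing $\gr$-class $R$, and let $f : (N, d_B) \to (M, d_A)$ be a quasi-isometry. By Proposition~\ref{prop_components}, $f(R)$ is contained in some $\gr$-class $K$ of $M$; picking any $\gh$-class $U$ of $K$, Theorem~\ref{quasiisomschutz} together with transitivity of quasi-isometry shows that $\mathcal{G}(H)$ and $\mathcal{G}(U)$ are quasi-isometric (exactly as in the proof of Theorem~\ref{finite_presentability_special_case}). Since $\mathcal{G}(U)$ satisfies $FP_3$ and $FP_3$ is a quasi-isometry invariant of groups by \cite{Alonso1994}, it follows that $\mathcal{G}(H)$ satisfies $FP_3$. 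Since $N$ is finitely presented with finitely many left and right ideals, $\mathcal{G}(H)$ is itself finitely presented by \cite[Theorem~1.1]{Ruskuc2000}, and therefore by \cite{Cremanns1996} has finite derivation type. As $H$ was arbitrary, every Sch\"{u}tzenberger group of $N$ has finite derivation type, and a second application of the result of \cite{GrayMalheiro} (in its converse direction) yields that $N$ itself has finite derivation type.

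There is no genuine geometric obstacle here; the potential trouble spots are purely bookkeeping. The most delicate point is ensuring that the finite presentability of the Sch\"{u}tzenberger groups is in hand at the moment we invoke \cite{Cremanns1996} (which requires finite presentation in order to conclude the equivalence of finite derivation type and $FP_3$); this is why we must first use Theorem~\ref{finite_presentability_special_case} to propagate finite presentability from $M$ to $N$, before transferring the homotopical/homological condition through the groups.
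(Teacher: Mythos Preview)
Your proof is correct and follows exactly the approach the paper indicates: the paper gives no separate proof of this theorem but states that it follows ``using the same argument as in the proof of Theorem~\ref{finite_presentability_special_case}'' together with the result of \cite{GrayMalheiro}, the equivalence of finite derivation type with $FP_3$ for finitely presented groups from \cite{Cremanns1996}, and the quasi-isometry invariance of $FP_n$ from \cite{Alonso1994}. Your added care in first invoking Theorem~\ref{finite_presentability_special_case} to secure finite presentability of $N$ (and hence of its Sch\"{u}tzenberger groups) before appealing to \cite{Cremanns1996} is a sensible explicit bookkeeping step that the paper leaves implicit.
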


Just as for Theorem~\ref{finite_presentability_special_case} each of Theorems~\ref{thm_sol_wp} and \ref{thm_fdt} apply to finitely generated Clifford monoids and completely $0$-simple semigroups.

\section{Growth}\label{sec_growth}

In this section we study the relationship between quasi-isometry and
the notion of \textit{growth} in semigroups and monoids. Recall that 
a (discrete) \emph{growth function} is a monotone non-decreasing function from $\mathbb{N}$ to $\mathbb{N}$.
Growth functions for finitely generated groups were introduced independently
by Milnor \cite{Milnor1968} and \v{S}varc \cite{Svarc1955}; since then growth of both
groups and monoids has become a subject of extensive study (see for example
  \cite{Bergman1978, Grigorchuk1988, Shneerson2001,  Shneerson2005, Shneerson2008}).
For growth functions $\alpha_1, \alpha_2$ we write $\alpha_1 \preccurlyeq \alpha_2$ if there exist natural numbers $k_1, k_2 \geq 1$ such that
$\alpha_1(t) \leq k_1 \alpha_2(k_2 t)$
for all $t \in \mathbb{N}$. We define an equivalence relation on growth functions
by $\alpha_1 \sim \alpha_2$ if and only if $\alpha_1 \preccurlyeq \alpha_2$ and $\alpha_2 \preccurlyeq \alpha_1$.
The $\sim$-class $[\alpha]$ of a growth function $\alpha$ is called the
\emph{growth type} or just \textit{growth} of the function $\alpha$. For a
semigroup $S$ generated by a finite set $A$ the function
\[
g_S: \mathbb{N} \rightarrow \mathbb{N}, \quad g_S(m) = |\{  z \in S : l_A(z) \leq m  \}|
\]
is called the \emph{growth function} of the semigroup $S$ (where $l_A(z)$ denotes the length of the element $z$ as defined above). The growth \textit{type} of this function is independent of the choice of finite generating set and so is simply called the growth of the semigroup.  See for example
\cite{Shneerson2005} for more about growth of semigroups.

The following result follows from Definition~22 and Proposition~25 of \cite[\S VI]{delaHarpe2000}.

\begin{lemma}\label{growth_equiv_condition}
Let $\alpha_1$ and $\alpha_2$ be growth functions. 
Then $\alpha_1 \preccurlyeq \alpha_2$ if and only if there exist natural numbers $\lambda$ and $C$ such that
\[
\alpha_1(t) \leq \lambda \alpha_2(\lambda t + C) + C
\]
for all $t \in \mathbb{N}$.
\end{lemma}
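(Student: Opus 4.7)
The task is to show that the inequality $\alpha_1(t) \leq k_1 \alpha_2(k_2 t)$ of the definition of $\preccurlyeq$ is equivalent to the stronger-looking mixed multiplicative-plus-additive inequality appearing in the statement. The plan is to prove the two directions independently, exploiting monotonicity of growth functions in each case.

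For the easy direction, assume $\alpha_1 \preccurlyeq \alpha_2$ in the sense of the original definition, so that $\alpha_1(t) \leq k_1 \alpha_2(k_2 t)$ for all $t \in \mathbb{N}$ and some $k_1, k_2 \geq 1$. I would set $\lambda = \max(k_1, k_2)$ and $C = 0$ (or $C = 1$, depending on whether $0 \in \mathbb{N}$ is permitted). Since $k_2 t \leq \lambda t + C$, monotonicity of $\alpha_2$ yields $\alpha_2(k_2 t) \leq \alpha_2(\lambda t + C)$, and since $k_1 \leq \lambda$ we obtain
\[
\alpha_1(t) \leq k_1 \alpha_2(k_2 t) \leq \lambda \alpha_2(\lambda t + C) + C
\]
as required.

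For the converse, assume the mixed inequality holds with constants $\lambda, C$. The goal is to produce constants $k_1, k_2$ realising the original definition. The two things to absorb are the additive shift $+C$ inside $\alpha_2$ and the additive constant $+C$ outside. For the first, observe that for every $t \geq 1$ we have $\lambda t + C \leq (\lambda + C) t$, so monotonicity gives $\alpha_2(\lambda t + C) \leq \alpha_2((\lambda + C) t)$. For the second, I would use the fact that for any growth function arising in the contexts of interest (and more generally, under the natural running assumption that $\alpha_2$ is eventually at least $1$) there exists $t_0$ such that $\alpha_2((\lambda + C) t) \geq 1$ for all $t \geq t_0$; combined with the previous step this gives, for $t \geq t_0$,
\[
\alpha_1(t) \leq \lambda \alpha_2((\lambda + C) t) + C \leq (\lambda + C) \alpha_2((\lambda + C) t).
\]
Hence one may set $k_1 = k_2 = \lambda + C$, giving $\alpha_1(t) \leq k_1 \alpha_2(k_2 t)$ for all sufficiently large $t$. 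The finitely many small values of $t$ are handled by increasing $k_1$ if necessary so that $k_1 \alpha_2(k_2 t) \geq \alpha_1(t_0)$ at the relevant points, using monotonicity of $\alpha_1$ to bound $\alpha_1(t)$ by $\alpha_1(t_0)$ for $t \leq t_0$.

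The only genuine subtlety is the handling of the additive constant $C$ outside, which is absorbed using positivity of $\alpha_2$; this is the step I expect to be the main (though still routine) obstacle, since it requires a mild non-degeneracy assumption on the growth functions. Everything else is a straightforward consequence of monotonicity and the inequality $\lambda t + C \leq (\lambda + C) t$ for $t \geq 1$.
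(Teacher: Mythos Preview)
The paper does not give its own proof of this lemma; it simply cites Definition~22 and Proposition~25 of \S VI of de la Harpe's \emph{Topics in Geometric Group Theory}. So there is no in-paper argument to compare against.

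Your plan is the standard argument and is essentially correct. The forward direction is exactly as you say. For the converse, your two moves --- absorbing the inner shift via $\lambda t + C \leq (\lambda + C)t$ for $t \geq 1$, and absorbing the outer $+C$ using $\alpha_2 \geq 1$ eventually --- are the right ones. A slightly cleaner packaging than your ``increase $k_1$ for small $t$'' step is to choose $k_2$ large enough that $\alpha_2(k_2 t) \geq 1$ for \emph{all} $t \geq 1$ simultaneously (take $k_2$ at least the first point where $\alpha_2$ hits $1$); then $k_1 = \lambda + C$ works uniformly for $t \geq 1$, and no separate treatment of small $t$ is needed.

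You are also right that the outer-constant absorption genuinely requires the mild non-degeneracy assumption that $\alpha_2$ is not identically zero: with $\alpha_1 \equiv 1$, $\alpha_2 \equiv 0$, $\lambda = C = 1$, the mixed inequality holds but $\alpha_1 \preccurlyeq \alpha_2$ fails. In the paper's applications the growth functions count elements in balls and so take values $\geq 1$, which is why this edge case is silently ignored; de la Harpe's conventions likewise rule it out. It is good that you flagged it explicitly.
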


\begin{definition}
A semimetric space $X$ is called \emph{uniformly quasi-locally bounded} if
\[
\sup_{x \in X} | \outball_t(x) | < \infty
\]
for all $t \in \mathbb{N}$. For such a space $X$, the \emph{growth function} of $X$ at $x_0 \in X$ 
is the function
$$\beta : \mathbb{N} \to \mathbb{N}, \quad t \mapsto |\outball_t(x_0)|.$$
\end{definition}

\begin{lemma}
Let $X$ be a semimetric space that is uniformly quasi-locally bounded, and
let $\beta_0$ and $\beta_1$ be the growth functions of $X$ at basepoints
$x_0$ and $x_1$ respectively. Then $\beta_0 \sim \beta_1$.
\end{lemma}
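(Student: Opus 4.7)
The plan is to exploit the fact that both $x_0$ and $x_1$ are basepoints, so that the (finite) ``forward distances'' $d(x_0,x_1)$ and $d(x_1,x_0)$ give uniform bounds on how far out-balls about the two basepoints can differ. By the triangle inequality, these bounds will translate into additive shifts in the argument of the growth function, which by Lemma~\ref{growth_equiv_condition} is enough to establish $\sim$-equivalence.

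Concretely, I would proceed as follows. First, since $x_0$ is a basepoint, $c_1 := d(x_0,x_1) < \infty$, and since $x_1$ is a basepoint, $c_0 := d(x_1,x_0) < \infty$. Let $C = \lceil \max(c_0,c_1) \rceil \in \mathbb{N}$. Now for any $y \in X$ and $t \in \mathbb{N}$, if $y \in \outball_t(x_0)$ then
\[
d(x_1, y) \leq d(x_1, x_0) + d(x_0, y) \leq c_0 + t \leq t + C,
\]
so $y \in \outball_{t+C}(x_1)$. This shows $\outball_t(x_0) \subseteq \outball_{t+C}(x_1)$, and taking cardinalities gives $\beta_0(t) \leq \beta_1(t+C)$ for every $t \in \mathbb{N}$. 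By the completely symmetric argument with the roles of $x_0$ and $x_1$ swapped (using $c_1$ in place of $c_0$), we likewise have $\beta_1(t) \leq \beta_0(t+C)$.

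Finally, these bounds both have the form required by Lemma~\ref{growth_equiv_condition} with $\lambda = 1$ and constant $C$:
\[
\beta_0(t) \leq \beta_1(t+C) \leq 1\cdot \beta_1(1\cdot t + C) + C,
\]
and analogously for $\beta_1$ in terms of $\beta_0$. Hence $\beta_0 \preccurlyeq \beta_1$ and $\beta_1 \preccurlyeq \beta_0$, so $\beta_0 \sim \beta_1$ as required. Note that the values $\beta_0(t), \beta_1(t)$ are genuinely finite natural numbers by the assumption of uniform quasi-local boundedness, and the functions are monotone non-decreasing since out-balls are nested, so they really are growth functions in the sense defined above.

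There is essentially no obstacle here; the only point that requires a little care is that distances take real values while growth functions are indexed by $\mathbb{N}$, which is handled by the harmless rounding $C = \lceil\max(c_0,c_1)\rceil$. The proof does not use uniform quasi-local boundedness in any essential way beyond ensuring that $\beta_0$ and $\beta_1$ take finite values; it is the asymmetry between out-balls about different basepoints that makes the statement non-trivial, but strong connectedness at the two chosen basepoints is exactly what neutralises this asymmetry.
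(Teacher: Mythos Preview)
Your proof is correct and follows essentially the same approach as the paper: use the finiteness of $d(x_0,x_1)$ and $d(x_1,x_0)$ (coming from the basepoint hypothesis) together with the triangle inequality to obtain the containment $\outball_t(x_0) \subseteq \outball_{t+C}(x_1)$, deduce $\beta_0(t)\le\beta_1(t+C)$, and then invoke Lemma~\ref{growth_equiv_condition}. Your write-up is in fact slightly more explicit than the paper's, spelling out the triangle-inequality step and the rounding of $C$ to an integer, but the argument is the same.
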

\begin{proof}
Since $x_0$ and $x_1$ are basepoints we have $d(x_0,x_1) < \infty$ and $d(x_1,x_0) < \infty$. Choose $C \in \mathbb{N}$ with $C > d(x_1,x_0)$. 
Now $\outball_t(x_0) \subseteq \outball_{t+C}(x_1)$ and hence $\beta_0(t) \leq \beta_1(t+C)$ for all $t \in \mathbb{N}$. 
Therefore $\beta_0 \preccurlyeq \beta_1$ by Lemma~\ref{growth_equiv_condition}. A dual argument establishes $\beta_1 \preccurlyeq \beta_0$.
\end{proof}

Let $S$ be a semigroup generated by a finite subset $A$.
Then the semimetric space $X = (S^1,d_A)$ is uniformly quasi-locally bounded and, with respect to the basepoint $1$, the growth function of $X$ is the growth function of the semigroup $S$. 

\begin{proposition}\label{prop_growth}
Let $X_1$ and $X_2$ be uniformly quasi-locally bounded
semimetric spaces with basepoints $x_1$ and $x_2$, respectively.
Let $\beta_j$ denote the corresponding growth function for $j=1,2$.
If $X_1$ quasi-isometrically embeds into $X_2$ then $\beta_1 \preccurlyeq \beta_2$. 
In particular, if $X_1$ and $X_2$ are quasi-isometric then they have the same type of growth.  
\end{proposition}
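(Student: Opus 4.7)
The plan is to pull a ball in $X_1$ forward through the quasi-isometric embedding, compare its image with a ball in $X_2$, and then control how much $f$ can collapse — the latter is the only step that uses the uniform quasi-local boundedness of $X_1$ in an essential way.

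Let $f:X_1 \to X_2$ be a $(\lambda,\epsilon)$-quasi-isometric embedding, and set $D = d_2(x_2, f(x_1))$, which is finite because $x_2$ is a basepoint of $X_2$. First I would check the forward inclusion: if $y \in \outball_t(x_1)$, then $d_2(f(x_1),f(y)) \leq \lambda t + \epsilon$, so by the triangle inequality
\[
d_2(x_2, f(y)) \;\leq\; D + \lambda t + \epsilon.
\]
Hence $f(\outball_t(x_1)) \subseteq \outball_{\lambda t + \epsilon + D}(x_2)$, and in particular $|f(\outball_t(x_1))| \leq \beta_2(\lambda t + \epsilon + D)$.

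Next I would bound the fibres of $f$. Suppose $f(y)=f(z)$; then the lower quasi-isometry inequality gives $\frac{1}{\lambda} d_1(y,z) - \epsilon \leq 0$, so $d_1(y,z) \leq \lambda \epsilon$. Consequently, for any $w \in X_2$ the preimage $f^{-1}(w)$ is contained in $\outball_{\lambda \epsilon}(y_0)$ for any fixed $y_0 \in f^{-1}(w)$. Because $X_1$ is uniformly quasi-locally bounded, the quantity
\[
K \;=\; \sup_{x \in X_1} |\outball_{\lambda \epsilon}(x)|
\]
is a finite natural number, and every fibre of $f$ has cardinality at most $K$. Combining this with the previous step,
\[
\beta_1(t) \;=\; |\outball_t(x_1)| \;\leq\; K \cdot |f(\outball_t(x_1))| \;\leq\; K \cdot \beta_2(\lambda t + \epsilon + D).
\]

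Finally, I would apply Lemma~\ref{growth_equiv_condition}, picking natural numbers $\lambda'$ and $C$ with $\lambda' \geq \max(K,\lceil \lambda \rceil)$ and $C \geq \lceil \epsilon + D \rceil$, to conclude $\beta_1 \preccurlyeq \beta_2$. For the ``in particular'' clause, if $X_1$ and $X_2$ are quasi-isometric then a quasi-inverse of $f$ (which exists by the usual argument behind Proposition~\ref{prop_qsiprops}) is a quasi-isometric embedding $X_2 \to X_1$, so symmetry yields $\beta_2 \preccurlyeq \beta_1$ and therefore $\beta_1 \sim \beta_2$.

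The only mildly delicate point is the fibre bound: one has to notice that although $f$ need not be injective, collapsing is constrained by the additive error $\epsilon$, and it is precisely the uniform quasi-local boundedness of the \emph{source} space that converts this into a uniform bound on $|f^{-1}(w)|$. Everything else is a direct translation of the metric-space argument to the asymmetric setting, with the basepoint assumption playing the role that strong connectedness plays in the symmetric case.
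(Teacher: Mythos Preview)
Your proof is correct and follows essentially the same route as the paper's: push an out-ball forward through $f$ to land in a larger out-ball around $x_2$, bound the fibres of $f$ uniformly via the lower quasi-isometry inequality together with uniform quasi-local boundedness of $X_1$, and then invoke Lemma~\ref{growth_equiv_condition}. The only cosmetic difference is that the paper rounds $\lambda$ and $\epsilon$ up to integers at the outset (so that expressions like $\beta_2(\lambda t + D)$ are literally defined on $\mathbb{N}$), whereas you defer this to the final choice of $\lambda'$ and $C$; either way works.
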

\begin{proof}
By Lemma~\ref{growth_equiv_condition} it will suffice to show that there exist natural numbers $\lambda$ and $C$ satisfying
$\beta_1 (t) \leq \lambda \beta_2 (\lambda t + C) + C$
for all $t \in \mathbb{N}$. 

Let $f: X_1 \rightarrow X_2$ be a $(\lambda,\epsilon)$-quasi-isometric
embedding. Clearly, we may assume without loss of generality that
$\lambda$ and $\epsilon$ lie in $\mathbb{N}$.
Since $x_2$ is a basepoint of $X_2$ it follows that $d_2(x_2, f(x_1)) < \infty$, and so we may choose a natural number $D$
with $D > d_2(x_2,f(x_1)) + \epsilon$. Now for any $x \in \outball_t(x_1)$
we have $d(x_1, x) \leq t$, and since is a $(\lambda,\epsilon)$-quasi-isometry
it follows that $d(f(x_1), f(x)) \leq \lambda t + \epsilon$ and hence
that
$$d(x_2, f(x)) \leq d(x_2, f(x_1)) + \lambda t + \epsilon \leq \lambda t + D$$
so that $f(x) \in \outball_{\lambda t + D}(x_2)$. Thus,
$f(\outball_t(x_1)) \subseteq \outball_{\lambda t + D}(x_2)$
and so
\[
|f(\outball_t(x_1))| \leq |\outball_{\lambda t + D}(x_2)| = \beta_2(\lambda t + D)
\]
for all $t \in \mathbb{N}$.
Next consider the fibres of the mapping $f$. For any $z \in X_2$ and any $x,y \in f^{-1}(z)$ we have $d_1(x,y) \leq \lambda \epsilon$. Since $X_1$ is uniformly quasi-locally bounded there is a natural number $E$ such that
\[
\sup_{z \in X_2} | f^{-1}(z) | \leq E.
\]
Therefore
 \[
|f(\outball(x_1,t))| \geq |\outball(x_1,t)| / E,
\]
and combining this with the previous formula we conclude that
\[
\beta_1(t) = |\outball(x_1,t)| \leq E |f(\outball(x_1,t))| \leq
E \beta_2(\lambda t + D)
\]
for all $t \in \mathbb{N}$. So setting $\mu = \max(E,\lambda) \in \mathbb{N}$ we obtain
\[
\beta_1(t) \leq \mu \beta_2(\mu t + D) \leq \mu \beta_2(\mu t + D) + D
\]
for all $t \in \mathbb{N}$, as required. 
\end{proof}

Let $S$ be a finitely generated semigroup. If $A$ and $B$ are two finite generating sets for $S$ then by Lemma~\ref{Independence} the semimetric spaces $(S,d_A)$ and $(S,d_B)$ are quasi-isometric, and so by Proposition~\ref{prop_growth}
the corresponding growth functions are of the same type. Thus (as mentioned above) the growth type depends only on the semigroup and not on the choice of generating set. More generally we have the following consequence of Proposition~\ref{prop_growth}.

\begin{theorem}\label{monoid_growth}
Let $S_1$ and $S_2$ be finitely generated semigroups with finite generating
sets $A_1$, $A_2$, and let $\beta_1$, $\beta_2$ be the corresponding growth
functions. If $(S_1,d_{A_1})$ and $(S_2,d_{A_2})$ quasi-isometrically embed
into each other then $S_1$ and $S_2$ have the same growth type. In
particular, growth type is a quasi-isometry invariant for finitely generated semigroups.
\end{theorem}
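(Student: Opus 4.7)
The plan is to apply (a minor adaptation of) Proposition~\ref{prop_growth}. The one small wrinkle is that $(S_i, d_{A_i})$ need not possess a basepoint, so Proposition~\ref{prop_growth} cannot be invoked verbatim; instead I would run the analogous computation expressed in terms of the length function $l_{A_i}$, with $g_{S_i}$ playing the role of the growth function at a basepoint. Note that each space $(S_i, d_{A_i})$ is uniformly quasi-locally bounded because every vertex has out-degree at most $|A_i|$.

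Suppose $f : (S_1, d_{A_1}) \to (S_2, d_{A_2})$ is a $(\lambda, \epsilon)$-quasi-isometric embedding; without loss of generality $\lambda, \epsilon \in \mathbb{N}$. The main technical step is to bound $l_{A_2}(f(y))$ linearly in terms of $l_{A_1}(y)$. Writing $y = a_1 a_2 \cdots a_k$ with $k = l_{A_1}(y)$ and $a_i \in A_1$, we have $d_{A_1}(a_1, y) \leq k-1$, hence $d_{A_2}(f(a_1), f(y)) \leq \lambda(k-1) + \epsilon$. Setting $M = \max\{ l_{A_2}(f(a)) : a \in A_1 \}$ (finite since $A_1$ is finite) and $C = M + \epsilon$, the triangle inequality gives $l_{A_2}(f(y)) \leq \lambda\, l_{A_1}(y) + C$, so $f$ maps $\{ y \in S_1 : l_{A_1}(y) \leq t \}$ into $\{ z \in S_2 : l_{A_2}(z) \leq \lambda t + C \}$.

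From this point the argument mirrors Proposition~\ref{prop_growth} essentially verbatim. The lower quasi-isometric embedding inequality forces any fibre $f^{-1}(z)$ to lie in an out-ball of radius $\lambda\epsilon$ in $(S_1, d_{A_1})$, so uniform quasi-local boundedness yields a uniform constant $N$ bounding all fibre sizes. Combining the two bounds gives $g_{S_1}(t) \leq N \cdot g_{S_2}(\lambda t + C)$, and Lemma~\ref{growth_equiv_condition} yields $g_{S_1} \preccurlyeq g_{S_2}$. The symmetric argument using the embedding in the reverse direction gives $g_{S_2} \preccurlyeq g_{S_1}$, so $g_{S_1} \sim g_{S_2}$, proving the first assertion. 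The final sentence is then immediate, since by Proposition~\ref{prop_qsiprops} any quasi-isometry produces quasi-isometric embeddings in both directions. The only step requiring real thought is the length bound on $l_{A_2}(f(y))$; everything else is routine bookkeeping.
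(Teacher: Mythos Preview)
Your argument is correct and in the same spirit as the paper's. The paper treats the theorem as an immediate consequence of Proposition~\ref{prop_growth}, having just observed that $(S^1,d_A)$ is uniformly quasi-locally bounded with basepoint $1$ and that its growth function there is (up to an additive constant) $g_S$. You are right, however, that the hypothesis of the theorem concerns $(S_i,d_{A_i})$ rather than $(S_i^1,d_{A_i})$, and that the former need not have a basepoint (e.g.\ a free semigroup on two generators). Your remedy---replacing $d(x_0,\,\cdot\,)$ by the length function $l_{A_i}$ and rerunning the proof of Proposition~\ref{prop_growth}---works perfectly; the bound $l_{A_2}(f(y)) \leq \lambda\, l_{A_1}(y) + C$ obtained from the factorisation $y = a_1(a_2\cdots a_k)$ is exactly the missing ingredient.

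It may be worth noting that this same length bound is precisely what one needs to show that a quasi-isometric embedding $f\colon S_1 \to S_2$ extends to a quasi-isometric embedding $f^1\colon S_1^1 \to S_2^1$ sending $1\mapsto 1$ (the only new inequality to check is that $d(1,f(y)) = l_{A_2}(f(y))$ is comparable to $d(1,y) = l_{A_1}(y)$, since $d(x,1)=\infty$ on both sides for $x\neq 1$). So your direct approach and the paper's implicit reduction to $S^1$ are really the same argument in slightly different packaging; you have simply made explicit a step the paper leaves to the reader.
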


One application of Theorem~\ref{monoid_growth} is to provide examples of
pairs of semigroups which are \textit{not} quasi-isometric. For example
finitely generated free commutative monoids of different rank are not
quasi-isometric, since the free commutative monoid on $d$ generators
has polynomial growth rate of order $d$.

\section{Ends}\label{sec_ends}

In this section we study the relationship between quasi-isometry and the
number of \textit{ends} of a monoid.
The concept of ends goes back to work of Freudenthal \cite{Freudenthal1931, Freudenthal1942}, who introduced the notion to try to capture
the idea of connectivity at infinity for a topological space. Hopf \cite{Hopf1944} showed that 
if the translates of a compact set under the action of a group of homeomorphisms cover the whole space,
then the space has one, two or infinitely many ends (in fact, uncountably many). In particular a finitely generated group has one, two or infinitely many ends (since it acts transitively on its Cayley graph), and the celebrated result of Stallings \cite{Stallings1968} states that 
a finitely generated group has more than one end if and only if it
admits a nontrivial decomposition as an amalgamated free product or an HNN
extension over a finite subgroup. For finitely generated groups 
(and more generally for locally finite undirected graphs \cite[Proposition~1]{Moller1992})
it is well-known that the number of ends is a quasi-isometry invariant. See
\cite{DicksAndDunwoody} for more on the importance of ends in group theory. 

Jackson and Kilibarda \cite{JacksonAndKilibarda} have recently initiated the study of ends
of finitely generated monoids. They define the number of ends of a monoid
$M$ to be the supremum number of infinite connected components that may be
obtained by removing a finite set of vertices from the underlying undirected
graph of the right Cayley graph of $M$. 
In this section we show that the
number of ends of a monoid, defined in this way, is a quasi-isometry
invariant. We first need a preliminary lemma.

\begin{lemma}\label{lemma_one}
Let $f : M \to N$ be a quasi-isometric embedding of monoids and
$S \subseteq M$ a subset of $M$. If $S$ is infinite then $f(S)$ is infinite.
\end{lemma}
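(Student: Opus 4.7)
The plan is to argue by contradiction: assume that $S$ is infinite while $f(S)$ is finite, and derive that an infinite set of points of $M$ would have to sit inside a strong ball of finite radius, which is impossible for a finitely generated monoid.

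Concretely, suppose $(\lambda,\epsilon)$ are the quasi-isometry constants and write $d = d_A$ for the semimetric on $M$ coming from some finite generating set $A$, and $d' = d_B$ for that on $N$. If $f(S)$ is finite, then the pigeonhole principle applied to the restriction $f|_S : S \to f(S)$ forces some fibre $f^{-1}(z) \cap S$ (with $z \in f(S)$) to be infinite. Fix any $x_0$ in this infinite fibre. For every other $x$ in the fibre we have $f(x) = f(x_0) = z$, so $d'(f(x_0),f(x)) = 0 = d'(f(x),f(x_0))$. Plugging into the lower estimate $\tfrac{1}{\lambda} d(\cdot,\cdot) - \epsilon \leq d'(f(\cdot),f(\cdot))$ gives
\[
d(x_0,x) \leq \lambda \epsilon \quad\text{and}\quad d(x,x_0) \leq \lambda \epsilon,
\]
so $x \in \strongball_{\lambda\epsilon}(x_0)$. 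Thus the infinite fibre is contained in a single strong ball of finite radius around $x_0$.

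On the other hand, in $(M,d_A)$ the out-ball $\outball_r(x_0)$ consists precisely of those elements of the form $x_0 w$ with $w \in A^*$ of length at most $r$; since $A$ is finite, there are only finitely many such words, hence $\outball_r(x_0)$ and a fortiori $\strongball_r(x_0)$ are finite for every finite $r$. This contradicts the fact that $\strongball_{\lambda\epsilon}(x_0)$ contains the infinite fibre, and completes the proof.

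The argument is essentially routine once one notes that a quasi-isometric embedding has fibres of bounded strong-diameter; the only mildly subtle point is that because $d$ is asymmetric one must extract both an upper bound on $d(x_0,x)$ and on $d(x,x_0)$ in order to conclude that the fibre lies in a \emph{strong} ball, and it is precisely the strong ball (not merely an out-ball) that is needed to invoke finiteness via the finite generating set. No obstacle is expected beyond keeping the two directions of the asymmetric distance straight.
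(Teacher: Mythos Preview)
Your proof is correct and follows essentially the same route as the paper's: both arguments use pigeonhole to locate an infinite fibre and then exploit the finiteness of out-balls in a finitely generated monoid to reach a contradiction. The paper measures distances from the identity $1_M$ to elements of the fibre and observes they are unbounded while their images are constant; you instead measure from a point $x_0$ inside the fibre and bound the fibre in a ball---a cosmetic difference.

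One small correction to your closing commentary: the claim that ``it is precisely the strong ball (not merely an out-ball) that is needed to invoke finiteness'' is backwards. As your own second paragraph shows, the out-ball $\outball_{\lambda\epsilon}(x_0)$ is already finite (it is the image of finitely many words), so the single inequality $d(x_0,x)\leq\lambda\epsilon$ already suffices; the bound on $d(x,x_0)$ is superfluous. In a general finitely generated monoid it is the \emph{in}-ball that can be infinite, not the out-ball.
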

\begin{proof}
Suppose false for a contradiction. Then by the pigeon hole principle,
there exists an infinite subset $T \subseteq S$ and a
point $n \in N$ such that $f(T) = \lbrace n \rbrace$. Now since the
Cayley graph of $M$ has finite outdegree, $T$ must contain elements $t$
such that $d(1_M, t)$ is finite but arbitrarily large. But
$d(f(1_M), f(t)) = d(f(1_M), n)$ is constant as $t$ varies within $T$,
which clearly contradicts the assumption that $f$ is a quasi-isometric
embedding.
\end{proof}

\begin{theorem}\label{thm_ends}
Let $(M, X)$ and $(N,Y)$ be quasi-isometric finitely generated
monoids. Then $(M,X)$ and $(N,Y)$ have the same number of ends.
\end{theorem}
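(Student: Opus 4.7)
My plan is to adapt the standard proof that the number of ends is a quasi-isometry invariant of locally finite graphs, being careful that our quasi-isometry is controlled only in the asymmetric semimetric whereas ends are defined via the underlying undirected Cayley graph. Write $\tilde d_X,\tilde d_Y$ for distance in the underlying undirected Cayley graphs of $M$ and $N$; these graphs are locally finite because the generating sets are finite, so all combinatorial balls in them are finite, and Lemma~\ref{lemma_one} then gives the key ``preimages of finite sets under quasi-isometries are finite'' fact.

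By Proposition~\ref{prop_qsiprops} quasi-isometry is symmetric, so it suffices to show that if $N$ has at least $k$ ends then so does $M$. Fix a $(\lambda,\epsilon,\mu)$-quasi-isometry $f\colon M\to N$ and build a quasi-inverse $\bar f\colon N\to M$ in the usual way from the quasi-density of $f(M)$, so that $f\bar f$ and $\bar f f$ are within bounded undirected distance of the identities. The crucial observation is that although $f$ is controlled only on the asymmetric semimetric, it maps undirected edges to short undirected paths: an undirected edge $m\sim m'$ in $M$ forces $\min(d_X(m,m'),d_X(m',m))=1$, whence $\tilde d_Y(f(m),f(m'))\leq\lambda+\epsilon$, and an analogous bound holds for $\bar f$.

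Now fix a finite $F\subseteq N$ whose removal leaves $k$ infinite undirected components $C_1,\dots,C_k$, and pick a thickening constant $T$ that is large in terms of $\lambda,\epsilon,\mu$. Let $F_T=\{n\in N:\tilde d_Y(n,F)\leq T\}$, finite by local finiteness, and for each $j$ choose an infinite undirected component $E_j\subseteq C_j\cap(N\setminus F_T)$; such $E_j$ exist because $C_j$ is infinite and $F_T$ is finite. Define $F_M=f^{-1}(F_{T_0})\subseteq M$ for an intermediate constant $T_0<T$; Lemma~\ref{lemma_one} applied to $f$ forces $F_M$ to be finite. Two claims then finish the proof. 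First, $f$ sends any undirected component of $M\setminus F_M$ into a single undirected component of $N\setminus F$: concatenate the short images produced by the crucial observation, using that the endpoints are at $\tilde d_Y$-distance greater than the length of the image from $F$ to keep all intermediate vertices outside $F$. Second, for each $j$ the set $\bar f(E_j)$ is an infinite subset of $M\setminus F_M$ lying in a single undirected component of $M\setminus F_M$ (infinitude by Lemma~\ref{lemma_one} applied to $\bar f$; containment and single-componentness via the analogous estimates for $\bar f$, provided $T$ was chosen large enough). The two claims combine to force distinct $j$'s to give distinct components, for otherwise $f\bar f(E_i)$ and $f\bar f(E_j)$ would share a component of $N\setminus F$ while respectively sitting within $\mu$ of $C_i$ and $C_j$.

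The main obstacle is bookkeeping: $T$ and $T_0$ must be chosen large enough in terms of $\lambda,\epsilon,\mu$ that every triangle-inequality estimate closes, so that neither the asymmetry between the semimetric and the undirected metric, nor the additive distortion of $f$ and $\bar f$, can violate the connectedness or avoidance claims. Each individual estimate is elementary; the subtlety lies in organising them so that the asymmetric semimetric does not spoil a connectedness argument that would be routine in the symmetric setting.
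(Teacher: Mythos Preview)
Your outline is correct and the strategy works, but it is genuinely different from the paper's argument. Let me spell out the comparison.

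\textbf{Direction and machinery.} The paper pushes a separating set forward: given finite $S\subseteq M$ with $r$ infinite complementary components, it builds finite sets $T\subseteq U\subseteq N$ using \emph{directed} out-balls around $f(S)$ and shows $U$ separates $N$ into at least $r$ infinite pieces. It never constructs a quasi-inverse. You instead pull back: given $F\subseteq N$ you take $F_M=f^{-1}(F_{T_0})$ and use an explicit quasi-inverse $\bar f$ to plant an infinite set $\bar f(E_j)$ in each candidate component of $M\setminus F_M$.

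\textbf{Handling the asymmetry.} This is the real point of divergence. You neutralise the asymmetry at the outset with the observation that an undirected edge forces one of the two directed distances to equal $1$, so its $f$-image has bounded undirected length; thereafter your argument is essentially the classical symmetric one carried out in $\tilde d_X,\tilde d_Y$. The paper keeps the directed semimetric in play throughout: its key step is to prove there is no short \emph{directed} path between distinct $D_p,D_q$, and then to analyse an undirected path by locating the first vertex that escapes a \emph{strong} $\mu$-neighbourhood of $D_i$, using quasi-density (strong balls) rather than a quasi-inverse to produce the contradiction.

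\textbf{What each buys.} Your route is conceptually closer to the standard locally-finite-graph proof and, once $\bar f$ is set up, the estimates are routine; the cost is the extra object $\bar f$ and the two layers of thickening $T_0<T$ whose interaction you must track. The paper's route is more self-contained (only $f$, only one thickening pair $T\subseteq U$) and stays native to the semimetric setting, at the price of the somewhat delicate ``first escape vertex'' argument.

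\textbf{Minor points to tighten.} Your existence claim ``such $E_j$ exist because $C_j$ is infinite and $F_T$ is finite'' implicitly uses that removing a finite set from a connected locally finite graph leaves only finitely many components (this is \cite[Lemma~5]{JacksonAndKilibarda}, which the paper invokes explicitly). Also, applying Lemma~\ref{lemma_one} to $\bar f$ requires noting that $\bar f$ is itself a quasi-isometric embedding of monoids; this is standard but should be stated. Finally, to show the path in $M$ joining $\bar f(n_i)$ to $\bar f(n_{i+1})$ avoids $F_M$ you must push each intermediate vertex back through $f$ and compare to $F_{T_0}$; this forces $T>T_0+\mu+\epsilon+\lambda L$ with $L=\lambda(1+2\mu+\epsilon)$, so be explicit about the order in which $T_0$ and $T$ are chosen.
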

\begin{proof}
Let $f : M \to N$ be a $(\lambda,\epsilon,\mu)$-quasi-isometry.

Suppose $S \subseteq M$ is a finite subset separating the Cayley graph of
$M$ into $r$ infinite components $C_1, \dots, C_r$. By
\cite[Lemma~5]{JacksonAndKilibarda}, $S$ separates $M$ into only finitely
many components in total. It follows that we
may absorb any finite components into $S$, and assume without loss of
generality that
$$M = S \cup C_1 \cup \dots \cup C_r.$$
Let $\omega = \lambda^2 (2 \mu + \epsilon + 1) + \epsilon$
and define
$$T = \lbrace t \in N \mid d(f(s), t) \leq \omega \textrm{ for some } s \in S \rbrace \text{ and }$$
$$U = \lbrace t \in N \mid d(f(s), t) \leq \omega + \mu \textrm{ for some } s \in S \rbrace.$$
Notice that $T$ and $U$ are finite, because $S$ is finite and the outdegree of each vertex in the Cayley
graph of $N$ is finite.
For $1 \leq i \leq r$ define $D_i = f(C_i) \setminus T$, and $E_i = f(C_i) \setminus U$.
We claim that $U$ separates $N$ into components, at least $r$ of which are
infinite.

We first claim that if $p \neq q$, there are no directed paths of length less
than $2 \mu + 1$ from $D_p$ to $D_q$. Indeed, suppose for a contradiction
that there was such a path. Since $D_p \subseteq f(C_p)$ and
$D_q \subseteq f(C_q)$ we may assume the path runs from $f(r) \in D_p$
to $f(s) \in D_q$ where $r \in C_p$ and $s \in C_q$. Now
since $f$ is a $(\lambda,\epsilon,\mu)$-quasi-isometry, there is a directed
path of length at most
$\lambda (2 \mu + 1 + \epsilon)$ from $r$ to $s$. By the assumption
on $S$, this path must pass through $S$; let $t \in S$ be a vertex on it.
Then there is a directed path from $t$ to $s$ of length at most
$\lambda (2 \mu + 1 + \epsilon)$, so using once again the fact that $f$ is
a $(\lambda,\epsilon,\mu)$-quasi-isometry, there is a directed path from
$f(t) \in f(S)$ to $f(s)$ of length at most $\lambda [\lambda (2 \mu + 1 + \epsilon)] + \epsilon = \omega$.
But by the definition of $T$, this means that $f(s) \in T$, which contradicts
the claim that $f(s) \in D_q = C_q \setminus T$.

Next, we claim that for $1 \leq i < j \leq k$, every
undirected path connecting a vertex in $D_i$ to a vertex in $D_j$ passes through $U$. Indeed,
suppose for a contradiction that $\pi$ is an undirected path from some $D_i$ to
some $D_j$ which does \textit{not} pass through $U$. We shall show that
there is a directed path of length at most $2\mu+1$
from some $D_p$ to some $D_q$ (with $p \neq q$), thus contradicting the
previous claim. Since $\pi$ is a path between vertices in a graph,
we may assume it has integer length. First, if $\pi$ has length
$1$ then it is a directed path (either from $D_i$ to $D_j$ or from $D_j$ to
$D_i$) and so clearly has the required form. Next, if the final vertex of $\pi$
lies within a strong ball of radius $\mu$ around $D_i$, then clearly there
is a path of length $\mu$ from $D_i$ to $D_j$ so the claim again holds.

Otherwise, let $x$
be the first vertex on the path $\pi$ which does \textit{not} lie within a
strong ball of
radius $\mu$ around $D_i$, and let $w$ be the immediately
preceding vertex. Then there is either an edge from $x$ to $w$, or an
edge from $w$ to $x$. Since $f$ is a $(\lambda,\epsilon,\mu)$-quasi-isometry,
$f(M)$ is $\mu$-quasidense in $N$, so $x$ lies within a strong ball
of radius $\mu$ around $f(y)$ for some $y \in M$. Since
$M = S \cup C_1 \cup \dots \cup C_r$ we must have $y \in S$ or $y \in C_k$
for some $k$. If $y$ were in $S$ then, since $\mu \leq \omega$, by the
definition of $T$ we would have
$x \in T \subseteq U$, contradicting the assumption on $\pi$. So instead we
must have $y \in C_k$ for some $k$. Now if $f(y) \in T$ then since
$d(f(y),x) \leq \mu$ we would again have $x \in U$, once more contradicting
the assumption on $\pi$. There remains only the possibility that
$f(y) \in D_k$. Moreover, since $x$ was chosen not to be within a strong
ball of radius $\mu$ around $d_i$, we have $k \neq i$.

Now suppose there is an edge from $x$ to $w$. Since $x$ and $w$ are
within strong balls of radius $\mu$ around $D_i$ and $D_k$
respectively, there is a directed path of length at most $2 \mu + 1$ from
$D_i$ to $D_k$, which is a contradiction. On the other hand, if there is an
edge from $w$ to $x$ then by the same argument, there is a directed path of
length at most $2 \mu + 1$ from $D_k$ to $D_i$, which is again a contradiction.

We have now shown that every undirected path connecting $D_i$ to $D_j$
with $i \neq j$ passes through $U$. In particular, since $E_i \subseteq D_i$
for each $i$, every undirected path from $E_i$ to $E_j$ passes through $U$.
To complete the proof that $T$ separates $N$ into at least $r$ components, it
will suffice to show that each $E_i$ contains an infinite set of vertices
which remain connected when $U$ is removed.
By \cite[Lemma~5]{JacksonAndKilibarda} again, $U$ separates $N$ into only finitely
many components. Since it separates $E_i$ from everything else, it follows
that it separates $E_i$ into only finitely many components. But $C_i$ is
infinite so Lemma~\ref{lemma_one} ensures that $f(C_i)$ is infinite, and
since $U$ is finite it follows that $E_i = f(C_i) \setminus U$ is infinite,
as required.

We have now shown that if $M$ can be separated into $r$ components, then
$N$ can be separated into at least $r$ components. It follows that if $M$
has $r$ ends then $N$ has at least $r$ ends, while if $M$ has infinitely
many ends then so does $N$. By symmetry of assumption, it follows that
$M$ and $N$ have the same number of ends.
\end{proof}

Combining Theorem~\ref{thm_ends} with Proposition~\ref{Independence}, we
recover the following important
fact, which was first proved in \cite{JacksonAndKilibarda} by an entirely
different combinatorial argument.
\begin{corollary}
The number of ends of a finitely generated monoid is independent of the
choice of finite generating set.
\end{corollary}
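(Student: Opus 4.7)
The plan is to combine the two previously-established results in the only way that makes sense. Given a finitely generated monoid $M$ with two finite generating sets $A$ and $B$, Proposition~\ref{Independence} tells us that the identity map on $M$ is a quasi-isometry between the semimetric spaces $(M,d_A)$ and $(M,d_B)$. By Definition~\ref{def_qsi}, this is precisely what it means for the monoids $(M,A)$ and $(M,B)$ to be quasi-isometric as finitely generated monoids.

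Having secured this, I would simply invoke Theorem~\ref{thm_ends} with $(M,X) = (M,A)$ and $(N,Y) = (M,B)$ to conclude that the two Cayley graphs, one built from $A$ and the other from $B$, yield the same number of ends. Since $A$ and $B$ were arbitrary finite generating sets, the number of ends depends only on $M$.

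There is really no obstacle: all the work was done in proving Theorem~\ref{thm_ends}, and this corollary is just the specialisation of that theorem to the case where the two monoids are literally equal but equipped with possibly different generating sets. The only point worth being explicit about is that the notion of ``number of ends'' from \cite{JacksonAndKilibarda} is defined via the undirected Cayley graph, whereas the quasi-isometry of Proposition~\ref{Independence} is stated for the semimetric spaces $(M,d_A)$; but this compatibility is already absorbed into the statement of Theorem~\ref{thm_ends}, which concludes ``same number of ends'' directly from quasi-isometry of the monoids in the sense of Definition~\ref{def_qsi}. Hence no further bookkeeping is required.
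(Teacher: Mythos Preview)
Your proposal is correct and matches the paper's approach exactly: the paper states explicitly that the corollary follows by combining Theorem~\ref{thm_ends} with Proposition~\ref{Independence}, which is precisely what you do. Your remark about the Jackson--Kilibarda definition being absorbed into Theorem~\ref{thm_ends} is also accurate and needs no further elaboration.
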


As mentioned above, it is well-known that every Cayley graph of a finitely generated group has
one, two or infinitely many ends. Jackson and
Kilibarda \cite{JacksonAndKilibarda} showed the corresponding statement for monoids does
not hold. However, as a corollary of Theorem~\ref{quasiisomschutz}, we
see that in the case of $\gr$-classes with finitely many $\gh$-classes,
a corresponding statement does hold for Sch\"{u}tzenberger graphs.

\begin{corollary}
Let $M$ be a finitely generated monoid and let $R$ be an $\gr$-class of $M$.
If $R$ is a union of finitely many $\gh$-classes then the Sch\"{u}tzenberger
graph of $R$ has $1$, $2$ or infinitely many ends.
\end{corollary}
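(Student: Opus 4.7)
The strategy is to reduce to the classical Hopf--Freudenthal trichotomy for finitely generated groups via Theorem~\ref{quasiisomschutz}. Since $R$ contains only finitely many $\gh$-classes, picking any $\gh$-class $H \subseteq R$, Theorem~\ref{quasiisomschutz} says that the Sch\"{u}tzenberger group $\mathcal{G}(H)$ is finitely generated and quasi-isometric to the Sch\"{u}tzenberger graph $\Gamma(R,A)$. By the classical theorem of Hopf and Freudenthal recalled at the start of Section~\ref{sec_ends}, the Cayley graph of $\mathcal{G}(H)$ has $1$, $2$ or infinitely many ends. It therefore suffices to show that $\Gamma(R,A)$ has the same number of ends as $\mathcal{G}(H)$.

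The remaining step is transferring the count of ends across the quasi-isometry. One can either invoke the quasi-isometry invariance of the number of ends for locally finite undirected graphs (the Moller result cited just before Theorem~\ref{thm_ends}), or imitate the proof of Theorem~\ref{thm_ends} itself. In either approach one must verify that the underlying undirected graph of $\Gamma(R,A)$ is locally finite and that removing a finite vertex set leaves only finitely many connected components. The outdegree of each vertex is bounded by $|A|$, and since $\mathcal{G}(H)$ acts cocompactly on $\Gamma(R,A)$ with trivial point stabilisers by Theorem~\ref{thm_schutzact} (its orbits being the finitely many $\gh$-classes in $R$), the indegree is uniformly bounded in terms of $|A|$ and the number of $\gh$-classes. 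The finiteness of the components cut out by a finite vertex set $S$ follows by the same cocompactness: $S$ lies inside finitely many translates of a fixed strong ball, so the argument of \cite[Lemma~5]{JacksonAndKilibarda} adapts to $\Gamma(R,A)$ essentially unchanged.

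The main obstacle is these technical verifications of local finiteness and of the finite-components property for $\Gamma(R,A)$; the conceptual core, namely the identification of the quasi-isometry type of $\Gamma(R,A)$ with that of a finitely generated group, is already supplied by Theorem~\ref{quasiisomschutz}. Once the prerequisites are in place, the trichotomy for $\Gamma(R,A)$ is immediate from Hopf--Freudenthal applied to $\mathcal{G}(H)$.
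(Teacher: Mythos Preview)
Your approach is correct and is precisely the argument the paper intends: the corollary is stated (without proof) as a consequence of Theorem~\ref{quasiisomschutz}, placed immediately after Theorem~\ref{thm_ends}, so the implied proof is exactly the reduction to the Hopf--Freudenthal trichotomy for $\mathcal{G}(H)$ via quasi-isometry invariance of ends. Your technical verifications---the indegree bound via the semiregular action of $\mathcal{G}(H)$ on $R$, and the finite-components property---are sound, and in fact the latter is automatic once the underlying undirected graph is connected and locally finite.
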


\section{Examples}

In this section we give some examples of monoids that are quasi-isometric to one another.
The idea roughly speaking is that for two monoids to be
quasi-isometric, globally they must share the same $\gr$-class poset structure, and locally they must have group structure that is the same up to quasi-isometry. Recall that a semigroup is called \emph{right simple} if it has a single $\gr$-class. 

\begin{proposition}
The property of being a group is a quasi-isometry invariant of
finitely generated monoids.
\end{proposition}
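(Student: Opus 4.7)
The plan is to reduce the claim to a purely order-theoretic statement about the $\mathcal{R}$-class poset, and then invoke Proposition~\ref{prop_components}.

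First I would establish the following algebraic characterisation: a monoid $M$ is a group if and only if $M/\mathcal{R}$ is a one-element poset. The forward direction is trivial, since in a group $G$ we have $xG = G$ for every $x$. For the converse, suppose $M$ is a monoid with $xM = M$ for all $x \in M$. Taking the element $1 \in M$, this yields for each $x$ an element $x' \in M$ with $xx' = 1$, i.e.\ every element has a right inverse. A standard one-line argument then shows $M$ is a group: if $xx' = 1$ and $x'x'' = 1$, then $x = x(x'x'') = (xx')x'' = x''$, so $x'x = x'x'' = 1$, and thus $x'$ is a two-sided inverse of $x$.

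Next I would recall, as noted in the paper just after the definition of Green's relations, that if $M$ is generated by a finite set $A$ then the $\mathcal{R}$-classes of $M$ are exactly the strongly connected components of the semimetric space $(M, d_A)$. In particular, $M$ being a group is equivalent to the poset of strongly connected components of $(M, d_A)$ being a singleton.

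Finally, I would apply Proposition~\ref{prop_components}. Suppose $M$ is a group and $N$ is a finitely generated monoid quasi-isometric to $M$, with finite generating sets $A$ and $B$ respectively. By Proposition~\ref{prop_components}, the posets $(M/\sim) \cong M/\mathcal{R}$ and $(N/\sim) \cong N/\mathcal{R}$ are isomorphic. Since $M/\mathcal{R}$ is a one-element poset, so is $N/\mathcal{R}$, and hence by the characterisation above $N$ is a group. There is essentially no obstacle; the argument is a direct repackaging of Proposition~\ref{prop_components} together with the classical fact that a monoid in which every element has a right inverse is a group.
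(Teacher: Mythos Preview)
Your proposal is correct and follows essentially the same approach as the paper: both arguments use Proposition~\ref{prop_components} (or its immediate consequence that the number of $\mathcal{R}$-classes is a quasi-isometry invariant) to deduce that the monoid in question is right simple, and then invoke the fact that a right simple monoid is a group. The only difference is that you supply a direct proof of this last fact, whereas the paper cites \cite[Theorem~1.27]{CliffordAndPreston}.
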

\begin{proof}
Let $M$ be a finitely generated monoid that is quasi-isometric to a finitely generated group $G$.
Since the number of $\gr$-classes is a quasi-isometry invariant, this implies that $M$ is right simple (i.e. has a single $\gr$-class). But any right simple monoid is necessarily a group (see \cite[Theorem~1.27]{CliffordAndPreston}).
\end{proof}

\begin{proposition}
A finitely generated semigroup is right simple if and only if it is quasi-metric.
\end{proposition}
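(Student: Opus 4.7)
The plan is to prove the two directions separately. The reverse direction is immediate: by definition a quasi-metric space is strongly connected, so $d_A(x,y) < \infty$ for all $x,y \in S$; this forces $y \in xS^1$ for every $x, y$, hence $xS^1 = S$ for each $x$, and $S$ is right simple.

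For the forward direction, suppose $S$ is right simple with finite generating set $A$. The aim is to produce a single constant $M \geq 0$ such that $d_A(xa, x) \leq M$ for every $x \in S$ and every $a \in A$. Once $M$ is in hand, the triangle inequality applied to the intermediate vertices $x_i = x a_1 \cdots a_i$ along a shortest word $y = x a_1 \cdots a_n$ realising $n = d_A(x,y)$ yields
\[
d_A(y, x) \;\leq\; \sum_{i=0}^{n-1} d_A(x_i a_{i+1}, x_i) \;\leq\; Mn \;=\; M \cdot d_A(x,y),
\]
and since $(S,d_A)$ is strongly connected (again by right simplicity) this gives the $(M,0)$-quasi-metric inequality.

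To produce $M$, I would exploit right simplicity as follows. For each generator $c \in A$, the set $cS$ is a nonempty right ideal, so by right simplicity $cS = S$; in particular $c \in cS$ provides an element $b_c \in S$ with $c b_c = c$. The key observation is that $b_c$ acts as a right identity for every element of $S$ whose word expression ends in the letter $c$: if $x = uc$ with $u \in S^1$, then $x b_c = u(c b_c) = uc = x$. Since every $x \in S = \langle A \rangle$ has a final letter in some generating expression, each $x$ admits a right identity drawn from the finite set $\{b_c : c \in A\}$. Setting $M = \max_{a,c \in A} d_A(a, b_c)$ --- which is finite, since $A$ is finite and each distance is finite by right simplicity --- we see that for $x$ ending in $c$ and any $a \in A$, a shortest word $w$ with $aw = b_c$ (so $|w| \leq M$) yields $x a w = x(aw) = x b_c = x$, giving $d_A(xa, x) \leq M$ as required.

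The main conceptual obstacle is recognising that although $S$ need not possess any global right identity --- for instance a right group $G \times E$ with $|E| > 1$ has none --- right simplicity nonetheless furnishes a finite family of \emph{local} right identities $b_c$ indexed by the generators, and this is already enough to control backward distances uniformly along forward paths.
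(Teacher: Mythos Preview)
Your proof is correct and follows the same overall strategy as the paper: both argue that $d_A(xa,x)$ is bounded uniformly over $x\in S$ and $a\in A$ by looking only at the last generator in an expression for $x$, and then iterate this bound along a geodesic from $x$ to $y$. The paper reaches the bound more directly, taking $\lambda=\max\{d_A(ba,b):a,b\in A\}$ and observing that if $x=x'b$ then $d_A(xa,x)\le d_A(ba,b)$; your route through the local right identities $b_c$ is sound but an unnecessary extra step.
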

\begin{proof}
Let $S$ be a semigroup generated by a finite set $A \subseteq S$. If $S$ is quasi-metric then $(S,d_A)$ is strongly connected and hence is right simple. 

Conversely suppose that $S$ is right simple. Define 
\[
\lambda = \max \{ d_A(ba,a) : a,b \in A  \}
\]
which exists since $A$ is finite, and is finite since $S$ is right simple. Then for all $x \in S = \langle A \rangle$ and $a \in A$, writing $x = x'b$ where $b \in A$, we obtain
\[
d_A(xa,x) \leq d_A(ba,b) \leq \lambda.
\]
It follows that for all $x,y \in S$
\[
d(y,x) \leq \lambda d(x,y)
\]
and therefore $(S,d_A)$ is quasi-metric. 
\end{proof}

It is well-known, and follows quite easily from the \v{S}varc--Milnor Lemma,
that a surjective morphism of finitely generated groups is a quasi-isometry
if and only if it has finite kernel. The natural extension of this fact to
monoids is the following.
\begin{proposition}
Let $S$ be a semigroup generated by a finite subset $A$, and let $\eta$ be a
congruence on $S$. Then the natural map $\phi: S \rightarrow S/\eta$
sending $s \mapsto s / \eta$ is a quasi-isometry if and only
if there is a bound on the $d_A$-diameter of the $\eta$-classes of $S$.
\end{proposition}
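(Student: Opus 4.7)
The plan is to compare the word-length metrics on $S$ and $S/\eta$ directly, noting that $S/\eta$ is finitely generated by the finite set $\phi(A)=\{a/\eta : a \in A\}$. A word $u \in \phi(A)^*$ of length $n$ has the form $(a_1/\eta)\cdots(a_n/\eta) = (a_1\cdots a_n)/\eta$, so $(x/\eta)\cdot u = (y/\eta)$ if and only if there is $w = a_1\cdots a_n \in A^*$ with $xw\,\eta\,y$. Consequently
\[
d_{\phi(A)}(\phi(x),\phi(y)) \;=\; \inf\{|w| : w \in A^*,\ xw\,\eta\,y\},
\]
and since $xw = y$ implies $xw\,\eta\,y$, the inequality $d_{\phi(A)}(\phi(x),\phi(y)) \leq d_A(x,y)$ holds unconditionally. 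Moreover $\phi$ is surjective, so its image is $0$-quasi-dense. Thus the only content of the theorem is controlling the reverse inequality $d_A(x,y) \leq \lambda\, d_{\phi(A)}(\phi(x),\phi(y)) + \epsilon$.

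For the ``if'' direction, suppose every $\eta$-class has $d_A$-diameter at most $D$, meaning $d_A(u,v) \leq D$ whenever $u\,\eta\,v$. Given $x,y \in S$ with $n = d_{\phi(A)}(\phi(x),\phi(y)) < \infty$, choose $w \in A^*$ with $|w|=n$ and $xw\,\eta\,y$. Then $xw$ and $y$ lie in a common $\eta$-class, so $d_A(xw,y) \leq D$, and the triangle inequality yields
\[
d_A(x,y) \;\leq\; d_A(x,xw) + d_A(xw,y) \;\leq\; n + D.
\]
Together with the earlier inequality this shows $\phi$ is a $(1,\max(D,1),0)$-quasi-isometry (inflating $\epsilon$ if needed to satisfy the convention $\epsilon > 0$).

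For the ``only if'' direction, suppose $\phi$ is a $(\lambda,\epsilon,\mu)$-quasi-isometry. If $x,y$ lie in the same $\eta$-class then $\phi(x)=\phi(y)$, so the lower quasi-isometric inequality reads $\tfrac{1}{\lambda}d_A(x,y) - \epsilon \leq 0$, giving $d_A(x,y) \leq \lambda\epsilon$. The same bound applies to every pair in every class, so $\lambda\epsilon$ is a uniform bound on the $d_A$-diameters of the $\eta$-classes.

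No step here is difficult; the main thing to get right is the opening identification of $d_{\phi(A)}$ with the inherited word-length function on $S$ modulo $\eta$, which lets one convert quasi-isometry constants directly into diameter bounds and vice versa without any further combinatorics.
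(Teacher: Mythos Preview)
Your proof is correct and follows essentially the same approach as the paper's own argument: both establish the unconditional inequality $d_{\phi(A)}(\phi(x),\phi(y)) \leq d_A(x,y)$ from the definition of the quotient generating set, use the diameter bound to convert a length-$n$ word witnessing $xw\,\eta\,y$ into the additive inequality $d_A(x,y) \leq n + D$, and deduce the converse bound $d_A(x,y) \leq \lambda\epsilon$ directly from the lower quasi-isometry inequality applied to $\phi(x)=\phi(y)$. The only cosmetic difference is that you make the identification $d_{\phi(A)}(\phi(x),\phi(y)) = \inf\{|w| : xw\,\eta\,y\}$ explicit up front, which the paper leaves implicit.
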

\begin{proof}
First suppose that there is a bound, say $R>0$, on the $d_A$-diameter of the $\eta$-classes of $S$. We claim that the map $\phi:S \rightarrow S / \eta$ given by $s \mapsto s / \eta$ is a $(\lambda, \epsilon, \mu)$
quasi-isometry where $\lambda=1$, $\epsilon=R$ and $\mu=0$.

The mapping $\phi$ is surjective and hence $\phi(S)$ is $0$-quasi-dense in $S / \eta$.

Let $x,y \in S$. If $d_A(x,y)=r$ then we can write $x a_1 \cdots a_r$ where $a_i \in A$ for all $i$, and so $$(x/\eta) (a_1/\eta) \cdots (a_r / \eta) = y / \eta$$ which implies $d_{A / \eta} (\phi(x),\phi(y)) \leq r = d(x,y)$.

Also, if $d_{A/\eta}(f(x),f(y)) = k$ then this means that we can write $$(x/\eta)(a_1/\eta) \cdots (a_k/\eta) = (y/\eta),$$ where $a_i \in A$ for all $i$. This implies $(xa_1 \cdots a_k,y) \in \eta$ and so $d_A(x,y) \leq k + R = d_{A/\eta}(f(x),f(y)) + R$. We conclude that for all $x,y \in S$
\[
d_A(x,y) - R \leq d_{A/\eta}(f(x),f(y)) \leq d_A(x,y),
\]
as required.

For the converse suppose that $\phi$ is a quasi-isometry, say with constants $(\lambda, \epsilon, \mu)$. Let $x,y \in S$ with $(x,y) \in \eta$. Then $\phi(x) = \phi(y)$, so
$$\frac{1}{\lambda} d_A(x,y) - \epsilon \leq d_{A/\eta}(\phi(x),\phi(y))=0$$
which implies $d_A(x,y) \leq \epsilon \lambda$. Thus $\epsilon \lambda$ is an upper bound on the $d_A$-diameter of the $\eta$-classes of $S$.
\end{proof}

Note that it does \textit{not} suffice for the above result to require that
the \textit{cardinalities} of $\eta$-classes be bounded, let alone merely
that they be finite. For example if $S$ is any finite monoid with more than
one $\gr$-class then the homomorphism from $S$ onto the trivial monoid satisfies
these conditions, but $S$ is not quasi-isometric to the trivial monoid.

A particular instance where the conditions of the above proposition are satisfied is when
$S \cong M \times G$ where $G$ is a finite group, $M$ is a monoid 
and $\eta$ is the congruence corresponding to the natural projection $M \times G \rightarrow M$, $(m,g) \mapsto m$.

\begin{corollary}\label{corol_direct_product}
Let $M$ be a finitely generated monoid, and let $G$ be a finite group.
Then $M \times G$ is quasi-isometric to $M$.
\end{corollary}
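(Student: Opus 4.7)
The plan is to derive the corollary directly from the immediately preceding proposition, applied to $S = M \times G$ with the congruence $\eta$ defined by $(m,g) \sim (m',g')$ iff $m = m'$. The $\eta$-classes are precisely the fibres $\{m\} \times G$ of the projection $\pi : M \times G \to M$, and the quotient $(M \times G)/\eta$ is naturally isomorphic to $M$. So, by the preceding proposition, it suffices to exhibit a finite generating set $A$ of $M \times G$ for which the $d_A$-diameter of every $\eta$-class is uniformly bounded; by Proposition~\ref{Independence}, the choice of generating set does not affect the resulting quasi-isometry type of $M$.

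The crucial step is choosing the generating set so as to kill the $G$-coordinate cheaply. Let $A_M$ be any finite generating set for $M$ and set
\[
A = (A_M \times \{1_G\}) \cup (\{1_M\} \times G).
\]
This is finite since both $A_M$ and $G$ are, and it generates $M \times G$ because an arbitrary element $(m,g)$ with $m = a_1 \cdots a_k$ factors as $(a_1,1_G)(a_2,1_G) \cdots (a_k,1_G)(1_M,g)$. Moreover, given two elements $(m,g_1)$ and $(m,g_2)$ lying in the same $\eta$-class, one has
\[
(m,g_1) \cdot (1_M, g_1^{-1}g_2) = (m,g_2),
\]
and the multiplier $(1_M, g_1^{-1}g_2)$ belongs to $A$; hence $d_A((m,g_1),(m,g_2)) \leq 1$. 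Thus every $\eta$-class has $d_A$-diameter at most $1$.

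Applying the preceding proposition, the quotient map $\phi : M \times G \to (M \times G)/\eta$ is a quasi-isometry. Since $(M \times G)/\eta \cong M$ and quasi-isometry type of $M$ does not depend on the generating set (Proposition~\ref{Independence}), we conclude that $M \times G$ and $M$ are quasi-isometric. There is no substantive obstacle here; the only point requiring care is making sure the generating set $A$ includes the elements $(1_M, g)$ for $g \in G$, so that the entire coset can be reached from any of its members in a single step. Without incorporating these elements into the generating set, one would have to bound the $d_A$-diameter by a more elaborate combinatorial argument, but with this choice the bound is immediate.
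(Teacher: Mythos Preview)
Your proof is correct and follows exactly the approach indicated in the paper: apply the preceding proposition to $S = M \times G$ with $\eta$ the congruence induced by the projection onto $M$. The paper merely asserts that the hypotheses of that proposition are met in this situation, whereas you make this explicit by choosing the convenient generating set $A = (A_M \times \{1_G\}) \cup (\{1_M\} \times G)$, which forces every $\eta$-class to have $d_A$-diameter at most $1$; this is a clean way to carry out the verification the paper leaves implicit.
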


In fact, for the same reason this result holds more generally for the semidirect product of a monoid and a group (with the monoid acting on the group by endomorphisms). 

If $G$ is a finitely generated group and $H$ is a subgroup of $G$ of finite index then, as a standard application of the \v{S}varc--Milnor lemma, $H$ is finitely generated
and is quasi-isometric to $G$. This fact may be used to give further examples of quasi-isometric monoids. 
We use $E(S)$ to denote the set of idempotents of a semigroup $S$. 

\begin{proposition}
Let $M$ and $N$ be finitely generated Clifford monoids and let $\phi: M \rightarrow N$ be an idempotent separating homomorphism such that $E(N) \subseteq \phi(M)$. If for every $e \in E(N)$ the pre-image $\phi^{-1}(e)$ has finite index in the maximal subgroup of $M$ that contains it, then $M$ and $N$ are quasi-isometric.
\end{proposition}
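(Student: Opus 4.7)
The plan is to reduce the problem to the preceding proposition on congruence quotients via the Clifford semilattice-of-groups decomposition. First, I would show that $\phi$ induces a semilattice isomorphism $\phi|_{E(M)} : E(M) \to E(N)$: injectivity is exactly the ``idempotent separating'' hypothesis, while surjectivity onto $E(N)$ follows from $E(N) \subseteq \phi(M)$ together with the observation that any $m \in M$ with $\phi(m) = e \in E(N)$ must lie in some $G_{e'}^M$ whose identity $e'$ satisfies $\phi(e') = \phi(m m^{-1}) = \phi(m) \phi(m)^{-1} = e \cdot e = e$. Because $M$ and $N$ are finitely generated Clifford monoids, their semilattices of idempotents are both finite (a finitely generated meet-semilattice on $n$ generators has at most $2^n$ elements, since every element is the meet of some subset of generators). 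Identifying the common semilattice as $Y$, I would write $M = \bigsqcup_{e \in Y} G_e^M$ and $N = \bigsqcup_{e \in Y} G_e^N$, with $\phi$ restricting to group homomorphisms $\phi_e : G_e^M \to G_e^N$ whose kernels are precisely the pre-images $\phi^{-1}(e)$.

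Second, I would show that the kernel congruence $\eta$ on $M$ induced by $\phi$ has uniformly bounded $d_A$-diameter classes, for any fixed finite generating set $A$ of $M$. Each $\eta$-class inside $G_e^M$ is a coset of the finite-index subgroup $\phi^{-1}(e)$; choosing a finite transversal $T_e$ for $\phi^{-1}(e)$ in $G_e^M$ (possible because the index is finite), each element of such a coset is of the form $g \cdot t$ with $t \in T_e$ and so lies within $d_A$-distance $\max_{e \in Y,\, t \in T_e} l_A(t)$ of $g$. Setting $R$ equal to this maximum, which is finite since $Y$ is finite and each $T_e$ is finite, every $\eta$-class has $d_A$-diameter at most $2R$. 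Applying the preceding proposition on congruence quotients then yields that $M \to M/\eta \cong \phi(M)$ is a quasi-isometry, so $M$ is quasi-isometric to $\phi(M)$ viewed as a submonoid of $N$ with generating set $\phi(A)$.

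Finally, to conclude that $M$ and $N$ themselves are quasi-isometric one must show that the inclusion $\phi(M) \hookrightarrow N$ is a quasi-isometry, i.e., that $\phi(M)$ is quasi-dense in $N$ and that the submonoid metric on $\phi(M)$ is equivalent to the restriction of the $d_B$-metric (for a finite generating set $B$ of $N$). Using $E(N) \subseteq \phi(M)$ and the structural hypothesis, I would argue that each image $\phi_e(G_e^M)$ sits as a finite-index subgroup of $G_e^N$, so that $G_e^N$ decomposes into finitely many cosets of $\phi_e(G_e^M)$ of bounded $d_B$-diameter; together with finiteness of $Y$ this gives the required quasi-density. The main technical obstacle I anticipate lies in this last step: extracting the ``large image'' condition on each $\phi_e$ from the stated hypothesis and the Clifford connecting-homomorphism structure is delicate, since the kernel-index hypothesis primarily controls how $\phi_e$ collapses $G_e^M$ rather than how $\phi_e(G_e^M)$ fits inside $G_e^N$, and bridging this gap is what makes the final step the heart of the proof.
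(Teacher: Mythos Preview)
Your Step~2 contains a genuine error: you have conflated ``finite index'' with ``finite order''. A transversal $T_e$ for $K_e := \phi^{-1}(e)$ in $G_{e'}^{M}$ indexes the \emph{cosets} of $K_e$, not the elements \emph{within} a single coset. A fixed $\eta$-class is one such coset $K_e t_0$, and its elements are $k t_0$ as $k$ ranges over \emph{all} of $K_e$; since the hypothesis only says $[G_{e'}^{M} : K_e] < \infty$, the subgroup $K_e$ itself --- and hence each of its cosets --- may be infinite with unbounded $d_A$-diameter. Concretely, take $M = \mathbb{Z}$ (a Clifford monoid with a single idempotent), $N$ the trivial group, and $\phi$ the zero map: then $K_0 = \mathbb{Z}$ has index~$1$, and the sole $\eta$-class is all of $\mathbb{Z}$, of infinite diameter. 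Thus the preceding proposition on congruences with bounded-diameter classes does not apply here, and your passage from $M$ to $\phi(M)$ via that proposition collapses.

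The paper gives no proof of this proposition; the only indication of the intended method is the sentence immediately preceding it, which points to the fact that a finite-index subgroup of a finitely generated group is quasi-isometric to the ambient group. That is a genuinely different device from the bounded-diameter congruence proposition you invoke --- it compares a group with a large \emph{subgroup}, not a monoid with a small-fibred \emph{quotient} --- and is what the authors evidently intend one to apply componentwise across the finite semilattice. Your concern in Step~4, that the stated hypothesis constrains only kernels and says nothing directly about how $\phi_e(G_{e'}^{M})$ sits inside $G_e^{N}$, is well founded and would have to be addressed in any complete argument along either route.
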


For similar reasons we have the following observation.

\begin{proposition}
Let $S = M^0[G;I, \Lambda, P]$ be a finitely generated completely $0$-simple semigroup represented as a $0$-Rees matrix semigroup over a group. Let $H$ be a subgroup of $G$ and suppose that every non-zero entry of $P$ belongs to $H$. If $H$ has finite index in $G$ then $S$ and $T = M^0[H;I,\Lambda,P]$ are quasi-isometric.
\end{proposition}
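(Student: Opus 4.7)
My plan is to build a quasi-isometry $\Phi\colon S \to T$ by decomposing both semigroups into strongly connected components and matching corresponding pieces using Theorem~\ref{quasiisomschutz}. Multiplication in a Rees matrix semigroup preserves the first coordinate on non-zero elements, and $0$ is absorbing, so the strongly connected components (in the sense introduced before Proposition~\ref{prop_components}) of $S$ are precisely the non-zero $\gr$-classes $R_i^S = \{(i,g,\lambda) : g \in G,\ \lambda \in \Lambda\}$ indexed by $i \in I$, together with $\{0\}$. The analogous description holds for $T$, and $I$ and $\Lambda$ are finite since $S$ is finitely generated.

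The core step will be to show $R_i^S \sim R_i^T$ for each $i$. Each $R_i^S$ is a union of $|\Lambda|$ many $\gh$-classes (so finitely many), and the Sch\"{u}tzenberger group of any such $\gh$-class is isomorphic to $G$; Theorem~\ref{quasiisomschutz} therefore gives $\Gamma(R_i^S,A) \sim G$, whence $\Gamma(R_i^S,A)$ is quasi-metric by Corollary~\ref{corol_quasi-metricity}. The remark after Proposition~\ref{prop_graphembed} then identifies $(R_i^S, d_A|_{R_i^S})$ with $\Gamma(R_i^S,A)$ up to quasi-isometry, so $(R_i^S, d_A) \sim G$. Because every non-zero entry of $P$ lies in $H$, the Rees matrix semigroup $T = M^0[H;I,\Lambda,P]$ is itself a completely $0$-simple semigroup to which the same argument applies, yielding $(R_i^T, d_B) \sim H$. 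Since $H$ has finite index in $G$, a standard application of the classical \v{S}varc-Milnor lemma (or Theorem~\ref{Svarc-Milnor} applied to the left-translation action of $H$ on $G$, which is proper and cocompact) gives $G \sim H$, and transitivity of quasi-isometry (Proposition~\ref{prop_qsiprops}) delivers $R_i^S \sim R_i^T$.

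To glue these together, I fix a quasi-isometry $\Phi_i\colon R_i^S \to R_i^T$ for each $i$ and, using the finiteness of $I$, take uniform constants $(\lambda,\epsilon,\mu)$ that work for every $\Phi_i$. By Proposition~\ref{Independence} I may further assume both $A$ and $B$ contain $0$, so that $d_A(x,0), d_B(y,0) \leq 1$ universally. Define $\Phi\colon S \to T$ by $\Phi(0) = 0$ and $\Phi|_{R_i^S} = \Phi_i$. The verification then splits into cases: for $x, y$ in the same $R_i^S$ the bounds come from $\Phi_i$, using that no word witnessing $xw = y$ between non-zero elements of $R_i^S$ can pass through the absorbing $0$, so $d_A|_{R_i^S}$ agrees with the intrinsic distance in the Sch\"{u}tzenberger graph; for $x, y$ in distinct non-zero components, both $d_S(x,y)$ and $d_T(\Phi(x),\Phi(y))$ are $\infty$; and the cases involving $0$ are controlled by the uniform bound $d(\cdot,0) \leq 1$ after a slight enlargement of $\epsilon$. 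Quasi-density of $\Phi(S) = \{0\} \cup \bigcup_i \Phi_i(R_i^S)$ in $T$ follows component-wise. I expect the principal obstacle to be purely organisational, namely the case analysis in this last step; the substantive geometric content is packaged into Theorem~\ref{quasiisomschutz}, which does the work of reducing the behaviour of each $\gr$-class to that of its Sch\"{u}tzenberger group.
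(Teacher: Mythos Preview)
Your proposal is correct and is precisely the argument the paper has in mind: the paper offers no explicit proof, only the remark ``for similar reasons'' pointing back to the fact that a finite-index subgroup $H$ is quasi-isometric to $G$, and your use of Theorem~\ref{quasiisomschutz} to identify each $\gr$-class with its Sch\"{u}tzenberger group, followed by the component-wise gluing, is exactly how that hint unpacks. The auxiliary checks you flag (finiteness of $I,\Lambda$, finite generation of $T$, agreement of $d_A|_{R_i^S}$ with the intrinsic Sch\"{u}tzenberger-graph distance via the absorbing zero, and the case analysis at $0$) are all sound.
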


\section*{Acknowledgements}

The research of the first author is supported by an EPSRC Postdoctoral Fellowship. The
research of the second author is supported by an RCUK Academic Fellowship. The
second author gratefully acknowledges the financial support and hospitality
of the \textit{Centre for Interdisciplinary Research in Computational Algebra} during a visit to
St Andrews.

\end{document}